\theoremstyle{definition}
\theoremstyle{remark}
\numberwithin{equation}{section}
\def\ifif {if and only if\ \ }
\theoremstyle{definition}\newtheorem{thm}{Theorem}[section]
\theoremstyle{definition}\newtheorem{cor}[thm]{Corollary}
\theoremstyle{definition}\newtheorem{lem}[thm]{Lemma}
\theoremstyle{definition}\newtheorem{prop}[thm]{Proposition}
\theoremstyle{definition}
\theoremstyle{definition}
\theoremstyle{remark}
\theoremstyle{definition}\newtheorem{exam}[thm]{Example}
\theoremstyle{definition}
\begin{document}
\title[On $z^{\circ}$-ideals]
{On $z^{\circ}$-ideals and annihilator ideals}



\author{Ali Taherifar$^{^{*}}$}
\address{Department of Mathematics, Yasouj University, Yasouj, Iran}
\email{ataherifar@yu.ir, ataherifar54@gmail.com}

\subjclass[2010]{ 16D25, 16D70, 06D22}


\keywords{$SA$-ring, IN-ring, $z^{\circ}$-ideal, Annihilator ideal, Semiprime ring, Frame\\ * Corresponding author email: {ataherifar@yu.ac.ir, ataherifar54@gmail.com}}

\begin{abstract}
For $a\in R$, let $P_a$ denote  the intersection of all minimal prime ideals of $R$ containing $a$. An ideal $I$ of a ring $R$ is called a $z^{\circ}$-ideal if $P_a\subseteq I$ for all $a\in I$.
In this paper, we first investigate the class of $z^{\circ}$-ideals in non-commutative rings.  We provide characterizations of   $z^{\circ}$-ideals in 2-by-2 generalized triangular matrix rings, full and upper triangular matrix rings, and semiprime rings. Next, we explore new properties of the  lattice $rAnn(id(R))$, the set of
right annihilator ideals of $R$. We prove that $rAnn(id(R))$ forms a frame when $R$ is  semiprime and a coherent frame when $R$ is a reduced ring.  Furthermore, we characterize the smallest (resp., largest) right annihilator ideal contained in an ideal $I$ of an $SA$-ring $R$.
\end{abstract}
\maketitle

\section{Introduction}
Throughout this paper, $R$ denotes a nonzero associative ring with identity. The concept of $z^{\circ}$-ideals was first introduced in commutative rings. An ideal of a ring $R$ is called a $z^{\circ}$-ideal if $P_a\subseteq I$, for every $a\in I$, where $P_a$ is the
intersection of all minimal prime ideals containing $a$. The study of $z^{\circ}$-ideals has been pursued by many authors under different names. They were first investigated in \cite{S} in the context of Baer rings under the name Baer ideals. In \cite{BE}, the author referred to them as $z$-ideals. In \cite{BO}, they were studied under the name pseudo-normal ideals, while the author of \cite{CO} called them $B$-ideals.  in \cite{AMM}
they were referred to as $\zeta$-ideals. Additionally, these ideals have been examined in Riesz spaces, $f$-rings, and frames under the name $d$-ideals, as seen in \cite{BKW} (Section 3.3), \cite{LU} (Section 1), \cite{D}, \cite{HU}, \cite{L}, and \cite{DI}. Due to their similarity to $z$-ideals in $C(X)$ (see \cite{GJ}), the authors in \cite{AT, A, AAP, AZ, AKR, AKN, AK, AM, TA} referred to them as $z^{\circ}$-ideals.

In this paper, we extend the definition of $z^{\circ}$-ideals from commutative rings to arbitrary rings. We recall in Section 2 the necessary background, and we fix notation. In Section 3, we investigate some properties of $z^{\circ}$-ideals in semiprime rings. Notably, we observe that understanding $z^{\circ}$-ideals in a ring $R$ can be reduced to studying them in semiprime rings.
We demonstrate that the class of $z^{\circ}$-ideals includes the class of $d$-ideals. Furthermore, we provide examples of $z^{\circ}$-ideals that are not $d$-ideals.

In Section 4, we  characterize  $z^{\circ}$-ideals in various ring extensions, including 2-by-2 generalized triangular matrix rings, as well as full and upper triangular matrix rings.

Section 5 focuses on the lattice of right annihilator ideals. In Theorem 5.4, we prove that when $R$ is a semiprime ring, the lattice of right annihilator ideals of $R$ ($rAnn(id(R))$) is a frame. For a reduced ring $R$, it is shown that $rAnn(id(R))$ is a coherent frame (Theorem 5.7). Additionally, in Theorem 5.11, we provide a characterization of the largest (resp., smallest) annihilator ideal contained in (resp., generated by) an ideal $I$ of a right
$SA$-ring $R$. Several corollaries and examples illustrating this result are also presented.
\section{Background and Notation}
\subsection{Rings}
For any subset $S$ of $R$, l(S) and r(S) denote the left annihilator
and the right annihilator of a subset $S$ in $R$. An idempotent $e$ of $R$ is a left(right) semicentral
idempotent if $Re = eRe (eR = eRe)$, and we use $S_l(R) (S_r(R))$ to denote the set
of left (right) semicentral idempotents of $R$. The ring of n-by-n (upper triangular)
matrices over $R$ is denoted by ($\bf T_n(R)) \bf M_n(R)$. A ring $R$ is called a \textit{right Ikeda–
Nakayama} (for short, a right $IN$-ring) if the left annihilator of the intersection of
any two right ideals is the sum of the left annihilators; that is, if $l(I)+ l(J)= l(I\cap J)$,
for all  right  ideals $I, J$ of $R$, see \cite{CN}. Recall from \cite{B} that a ring $R$ is called \textit{ right $SA$} if for each two  ideals $I, J$ of $R$, there exists an ideal $K$ of $R$ such that  $r(I) + r(J)= r(K)$, (see also \cite{HP}).
\subsection{Algebraic frame}
Our reference for frames  is \cite{PP}. A frame is a complete lattice $L$ satisfying the distributively law \[(\bigvee A)\wedge b=\bigvee \{a\wedge b: a\in A\},\] for any subset$A\subseteq L$ and $b\in L$. 

An element $a\in L$ is $compact$, if for any $X\subseteq L$, $a\leq\bigvee X$ implies that there is a finite subset $F$ of $X$ such that $a\leq\bigvee F$. We denote by  $\mathfrak{k}(L)$, the set of all compact elements of $L$. If every element of $L$ is the join of compact elements below it, then $L$ is said to be algebraic. If $a\wedge b\in $ for every $a, b\in L$, then $L$ is said to have the \textit{finite intersection property}, throughout abbreviated as FIP.  If the top element of $L$ (which we shall denote by
1) is compact and $L$ has FIP, then $L$ is called coherent.
\section{Preliminary results and examples of $z^{\circ}$-ideals}

For any subset $A$ of $R$, let $P_A$ denote the intersection of all minimal prime ideals of $R$ that contain $A$. When $A=\{a\}$ consists of a single element, we write $P_a$ instead  of  $P_A$. Similar to the commutative case, an ideal $I$ of a ring $R$ is called a $z^{\circ}$-ideal if $P_a\subseteq I$ for every $a\in I$. 
By definition, every minimal prime ideal of $R$ is a $z^{\circ}$-ideal and the intersection of any collection of $z^{\circ}$-ideals is a $z^{\circ}$-ideal. Consequently, the prime radical ideal of 
$R$, denoted as $P(R)$ is the smallest $z^{\circ}$-ideal of $R$.  This implies that the structure of $z^{\circ}$-ideals of $R$ is equivalent to that of $R/P(R)$.  Thus, for the study of $z^{\circ}$-ideals of a ring $R$, we may assume that $P(R)=(0)$, meaning that $R$ can be considered a semiprime ring. We begin by recalling some well-known results from commutative rings and extend them to the non-commutative setting.
\begin{exam}
(1) If $I$ is a non-zero ideal (left ideal) in a semiprime ring $R$, then $l(I)$ is a $z^{\circ}$-ideal. Since, we observe that $l(I)=\bigcap_{P\in\bf Min(R), P\not\supseteq I}P$, by \cite[Lemma 11.40]{L1}. This implies  $Re$ (resp., $eR$) is a $z^{\circ}$-ideal, when $e$ is a right (resp., left) semicentral idempotent. Since, $er=ere$, for each $r\in R$ and hence $er(1-e)=0$. This shows $eR(1-e)=0$. Hence $Re=l((1-e)R)=l(R(1-e))$.

(2) Every ideal  in a strongly regular ring $R$ is an intersection of 
minimal prime ideals and hence, it is a $z^{\circ}$-ideal. Since, every ideal in a strongly regular ring $R$ is semiprime and every prime ideal minimal over it is a minimal prime ideal of $R$.

(3) Let $R$ be a reduced ring. Then for every minimal left ideal $I$ of $R$, there is an idempotent $e\in R$ such that $I=Re=l((1-e)R)=l(R(1-e))$, by  \cite[Lemma 10.22]{L1}, and hence $IR$ is a $z^{\circ}$-ideal, by Part (1). 
\end{exam}
Mason in \cite{M} defined an ideal $I$ of a reduced ring $R$ as a $d$-ideal if for each $a\in I$, $r(l(a))\subseteq I$. We extend this definition to arbitrary rings. Specifically, an ideal $I$ of a ring $R$ is called a right  $d$-ideal if $r(l(RaR))\subseteq I$ for each $a\in I$. Since in a reduced ring $R$, we have $rl(RaR)=r(l(a))$, this provides a natural extension of Mason’s definition.  We observe that the class of $d$-ideals and $z^{\circ}$-ideals in a non-semiprime ring can be distinct.  Consider the ring $\Bbb Z_{12}$  (ring of integers modulo 12). Then, it is easy to see that the ideal $<\overline{4}>$ is a $d$-ideal but not a $z^{\circ}$-ideal. Since, $P_{\overline{4}}=<\overline{2}>\not\subseteq <\overline{4}>$.
\begin{prop}\label{ha}
For a ring $R$ the following statements are equivalent.
\begin{enumerate}
\item For each $a\in R$, $P_a\subseteq r(l(RaR))$.

\item The ring $R$ is semiprime.

\item For $a, b\in R$, $P_b\subseteq P_a$ implies $l(RaR)\subseteq l(RbR)$

\item Every right $d$-ideal is a $z^{\circ}$-ideal.

\item Every right annihilator ideal of $R$ is a $z^{\circ}$-ideal.
\end{enumerate}
\end{prop}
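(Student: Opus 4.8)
My plan is to prove the five statements equivalent through the cycle $(2)\Rightarrow(5)\Rightarrow(1)\Rightarrow(3)\Rightarrow(2)$, and then to attach $(4)$ by the two extra arrows $(1)\Rightarrow(4)$ and $(4)\Rightarrow(5)$. Throughout I would lean on two facts that are already on the table: the description $l(I)=\bigcap\{P : P \text{ a minimal prime},\ I\not\subseteq P\}$ from the first part of the Example above, and the standard observation that in a semiprime ring $l(J)=r(J)$ for every two-sided ideal $J$ (if $xJ=0$ then $(Jx)^{2}=J(xJ)x=0$, so $Jx=0$ since a semiprime ring has no nonzero nilpotent one-sided ideals). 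I would also record at the start two routine remarks used repeatedly: $l(RaR)$ is a two-sided ideal, and $a\in RaR\subseteq r(l(RaR))$, so that $r(l(RaR))$ is a genuine right annihilator ideal containing $a$.

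For $(2)\Rightarrow(5)$: if $A=r(I)$ is a right annihilator ideal of the semiprime ring $R$, then $A=r(I)=l(I)$, and $l(I)$ is an intersection of minimal prime ideals by the Example, hence a $z^{\circ}$-ideal. For $(5)\Rightarrow(1)$: since $l(RaR)$ is two-sided, $r(l(RaR))$ is a right annihilator ideal containing $a$; if every such ideal is a $z^{\circ}$-ideal, then $P_a\subseteq r(l(RaR))$, which is precisely $(1)$. For $(1)\Rightarrow(3)$: assuming $P_b\subseteq P_a$, I note $b\in P_b\subseteq P_a\subseteq r(l(RaR))$ by $(1)$, so $l(RaR)\,b=0$; using that $l(RaR)$ is two-sided, $l(RaR)\cdot RbR=(l(RaR)R)(bR)\subseteq (l(RaR)b)R=0$, i.e. $l(RaR)\subseteq l(RbR)$.

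The crux is $(3)\Rightarrow(2)$, which I would argue contrapositively. If $R$ is not semiprime then $P(R)\neq(0)$, so I may pick $0\neq c\in P(R)$. Every minimal prime contains $c$, hence $P_c=P(R)=P_0$; in particular $P_c\subseteq P_0$. Applying $(3)$ with $0$ in the role of $a$ and $c$ in the role of $b$ gives $l(R\cdot 0\cdot R)\subseteq l(RcR)$, that is $R=l(0)\subseteq l(RcR)$, whence $RcR=0$ and so $c=0$, a contradiction; this closes the main cycle. Finally I attach $(4)$: for $(1)\Rightarrow(4)$, if $I$ is a right $d$-ideal and $a\in I$, then $P_a\subseteq r(l(RaR))\subseteq I$, so $I$ is a $z^{\circ}$-ideal; and for $(4)\Rightarrow(5)$ I would show that every right annihilator ideal $r(I)$ is itself a right $d$-ideal. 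Indeed, for $a\in r(I)$ we have $Ia=0$, so $I\cdot RaR=(IR)(aR)\subseteq (Ia)R=0$, giving $I\subseteq l(RaR)$ and therefore $I\cdot r(l(RaR))\subseteq l(RaR)\cdot r(l(RaR))=0$, i.e. $r(l(RaR))\subseteq r(I)$; hence $r(I)$ is a right $d$-ideal and $(4)$ forces it to be a $z^{\circ}$-ideal.

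The step I expect to demand the most care is the identification sitting behind $(4)\Rightarrow(5)$ and $(5)\Rightarrow(1)$: one must verify that $l(RaR)$ is two-sided (so that $r(l(RaR))$ legitimately belongs to $rAnn(id(R))$) and that the members of $rAnn(id(R))$ are exactly the ideals $r(I)$ with $I$ two-sided, since the proof that a right annihilator ideal is a right $d$-ideal uses $IR\subseteq I$ in an essential way and would break for annihilators of merely one-sided subsets. The slick point in $(3)\Rightarrow(2)$, namely feeding $a=0,\ b=c$ into $(3)$ and exploiting $l(0)=R$, is short but is exactly where semiprimeness gets forced, so I would be careful with the degenerate conventions ($P_0=P(R)$, and the empty-/total-intersection reading of $P_a$) that make that argument go through.
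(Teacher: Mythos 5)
Your proof is correct; every step checks out, but you decompose the equivalences along a genuinely different route from the paper. The paper runs the single cycle $(1)\Rightarrow(2)\Rightarrow(3)\Rightarrow(4)\Rightarrow(5)\Rightarrow(1)$: its $(1)\Rightarrow(2)$ is the one-liner $P_0=r(l(0))=0$, and its workhorse is $(2)\Rightarrow(3)$, an elementwise minimal-prime argument (for $x\in l(RaR)$ one gets $P_{xRbR}\subseteq P_{xRaR}=P_0=0$, hence $xRbR=0$), after which $(3)\Rightarrow(4)\Rightarrow(5)$ proceed much as in your two attaching arrows. You instead close $(2)\Rightarrow(5)\Rightarrow(1)\Rightarrow(3)\Rightarrow(2)$ and hang $(4)$ off the cycle. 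The trade-off: your $(2)\Rightarrow(5)$ imports structural facts -- $l(J)=r(J)$ for two-sided $J$ in a semiprime ring, and the description $l(I)=\bigcap\{P\in \mathrm{Min}(R): I\not\subseteq P\}$ from the Example (Lam's Lemma 11.40) -- where the paper's corresponding work is self-contained, using only the definition of primeness and $P_0=0$; in exchange, your $(1)\Rightarrow(3)$ is a pure annihilator computation ($b\in P_a\subseteq r(l(RaR))$ plus two-sidedness of $l(RaR)$) that, notably, does not use semiprimeness at all, whereas the paper's $(2)\Rightarrow(3)$ does. Your $(3)\Rightarrow(2)$, feeding $a=0$ and $b=c$ with $0\neq c\in P(R)$ into $(3)$, is an apt mirror of the paper's $(1)\Rightarrow(2)$. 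For $(4)\Rightarrow(5)$ the two arguments are essentially the same computation read in opposite directions: the paper shows $r(l(RaR))$ is a right $d$-ideal squeezed between $a$ and $I$, while you show the annihilator ideal itself is a right $d$-ideal via $Ia=0\Rightarrow I\subseteq l(RaR)$; both are valid, and the caveats you flag at the end (two-sidedness of $l(RaR)$, $a\in r(l(RaR))$, and that members of $rAnn(id(R))$ are exactly the $r(J)$ with $J$ two-sided) are all correctly disposed of. One trivial edge your citation of the Example glosses over: its formula is stated for nonzero $I$, but in your $(2)\Rightarrow(5)$ the degenerate case $l(A)=0$ forces $A=R$, which is vacuously a $z^{\circ}$-ideal, so nothing breaks.
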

\begin{proof}

(1)$\Rightarrow$(2) We have $P_{\circ}= r(l(0))=0$. This says the intersection of all minimal prime ideals of $R$ is zero, i.e., $R$ is a semiprime ring.

(2)$\Rightarrow$(3) Let $P_b\subseteq P_a$ and $x\in l(RaR)$. Then $xRaR=0$ and hence $P_{xRbR}\subseteq P_{xRaR}=P_0=0$. This implies $xRbR=0$. Thus $x\in l(RbR)=l(Rb)$. Therefore, $l(RaR)\subseteq l(RbR)$.

(3)$\Rightarrow$(4) Let $I$ be a right $d$-ideal, $a\in I$ and $b\in P_a$. Then $P_b\subseteq P_a$. By hypothesis, $l(RaR)\subseteq l(RbR)$. This implies $b\in r(l(RbR))\subseteq r(l(RaR))\subseteq I$.

(4)$\Rightarrow$(5) Suppose that $I$ is a right annihilator and $a\in I$. Then $r(l(RaR))\subseteq I$. It is easy to see that $r(l(RaR))$ is a right $d$-ideal, and by hypothesis, every right $d$-ideal is a $z^{\circ}$-ideal. Thus, we obtain $P_a\subseteq r(l(RaR))\subseteq I$, which implies that $I$ is a $z^{\circ}$-ideal.

(5)$\Rightarrow$(1)  For each $a\in R$, $r(l(RaR))$ is a $z^{\circ}$-ideal, by hypothesis. Thus,  $a\in r(l(RaR))$ implies that $P_a\subseteq r(l(RaR))$.  
\end{proof}
\begin{cor}\label{sara}
Let $R$ be a semiprime ring. Then the class of $z^{\circ}$-ideals and right $d$-ideals coincide \ifif for every $a\in R$, $r(l(RaR))=P_a$.
\end{cor}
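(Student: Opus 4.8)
The plan is to pin down, for each fixed $a\in R$, two canonical ideals and exploit that each already lies in one of the two classes automatically. Concretely, I would first record the two general facts on which everything rests: the right annihilator ideal $r(l(RaR))$ is always a right $d$-ideal, and $P_a$ is always a $z^{\circ}$-ideal. The first is exactly the observation already invoked in the proof of Proposition \ref{ha}, (4)$\Rightarrow$(5); it is checked by a short formal computation, noting that $l(RaR)$ and $r(l(RaR))$ are two-sided ideals and that any $x\in r(l(RaR))$ forces $l(RaR)\subseteq l(RxR)$ (if $y\in l(RaR)$ then $yr\in l(RaR)$ annihilates $x$ on the right, so $yRxR=0$), hence $r(l(RxR))\subseteq r(l(RaR))$. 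The second is immediate from the remarks preceding Proposition \ref{ha}: each minimal prime is a $z^{\circ}$-ideal and $z^{\circ}$-ideals are closed under intersection, so $P_a$, being an intersection of minimal primes, is a $z^{\circ}$-ideal. I would also keep in mind the trivial memberships $a\in r(l(RaR))$ (since $RaR\subseteq r(l(RaR))$ and $R$ has an identity) and $a\in P_a$.

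For the implication ($\Leftarrow$), assume $r(l(RaR))=P_a$ for all $a\in R$. Since $R$ is semiprime, Proposition \ref{ha} ((2)$\Rightarrow$(4)) already guarantees that every right $d$-ideal is a $z^{\circ}$-ideal, so only the reverse inclusion of classes needs an argument. For this, let $I$ be a $z^{\circ}$-ideal and $a\in I$; then by definition $P_a\subseteq I$, and the hypothesis rewrites this as $r(l(RaR))\subseteq I$, which is precisely the condition making $I$ a right $d$-ideal. Hence the two classes coincide.

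For the implication ($\Rightarrow$), assume the two classes coincide and fix $a\in R$, then apply the coincidence to the two canonical ideals in turn. Since $r(l(RaR))$ is a right $d$-ideal, it is a $z^{\circ}$-ideal; as $a\in r(l(RaR))$, the $z^{\circ}$-property yields $P_a\subseteq r(l(RaR))$. Dually, since $P_a$ is a $z^{\circ}$-ideal, it is a right $d$-ideal; as $a\in P_a$, the $d$-ideal property yields $r(l(RaR))\subseteq P_a$. Combining the two inclusions gives $r(l(RaR))=P_a$.

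I do not expect a serious obstacle here: the substance lies entirely in the two preliminary facts and in the idea that the hypothesis of class coincidence should be tested on the extremal witnesses $r(l(RaR))$ and $P_a$ attached to a single element. The only step requiring genuine care is the verification that $r(l(RaR))$ is a right $d$-ideal in the possibly noncommutative setting, where one must carefully track left versus right annihilators and use throughout that $RaR$ is a two-sided ideal.
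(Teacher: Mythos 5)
Your proposal is correct and follows essentially the same route as the paper: both directions test the coincidence hypothesis on the two canonical witnesses $P_a$ (always a $z^{\circ}$-ideal) and $r(l(RaR))$ (always a right $d$-ideal, with $a$ in both), in combination with Proposition \ref{ha}, and your ($\Leftarrow$) is verbatim the paper's argument. The only minor variation is in ($\Rightarrow$), where the paper obtains the inclusion $P_a\subseteq r(l(RaR))$ from semiprimeness via Proposition \ref{ha}, whereas you derive it from your direct verification that $r(l(RaR))$ is a right $d$-ideal together with the coincidence hypothesis --- a harmless and correct variant.
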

\begin{proof}
$\Rightarrow$ By proposition \ref{ha}, for each $a\in R$, we have $P_a\subseteq r(l(RaR))$. Now, by hypothesis, $P_a$ is a right $d$-ideal and $a\in P_a$, it follows that $r(l(RaR))\subseteq P_a$, so we are done.

$\Leftarrow$ Again, by Proposition \ref{ha}, every right $d$-ideal is a $z^{\circ}$-ideal. Now, assume $I$ is a $z^{\circ}$-ideal and $a\in I$. Then, by definition, $P_a\subseteq I$. By hypothesis, $r(l(RaR))=P_a$, which implies $r(l(RaR))\subseteq I$, i.e., $I$ is a right $d$-ideal. This completes the proof.
\end{proof}
\begin{cor}\label{12}
Let $R$ be a reduced ring.
\begin{enumerate}
\item For each $a\in R$, $P_a=r(l(RaR))=r(l(a))$.

\item The class of $z^{\circ}$-ideals and right $d$-ideals coincide. 

\item A principal ideal $RaR$ is a right annihilator ideal \ifif it is a $z^{\circ}$-ideal.
\end{enumerate}
\end{cor}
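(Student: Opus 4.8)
The plan is to prove Corollary \ref{12} by specializing the earlier results to the reduced case, using that a reduced ring is in particular semiprime so that Proposition \ref{ha} and Corollary \ref{sara} are available.

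For part (1), the key identity to establish is $r(l(RaR)) = r(l(a))$ in a reduced ring; this is essentially the remark made just before Proposition \ref{ha}, namely that $rl(RaR) = r(l(a))$ in a reduced ring. I would justify this by noting that in a reduced ring the two-sided ideal generated by $a$ and the element $a$ itself have the same left annihilator: if $xa = 0$ then for any $r,s \in R$ one checks $x(ras) = 0$ using that $axra$ and similar products vanish because $(ax r a)$-type expressions square to zero in a reduced ring (reducedness forces $xRa = 0$ from $xa = 0$, since $(axr)^2 = axr \cdot axr$ and manipulation shows the relevant products are nilpotent hence zero). Thus $l(a) = l(RaR)$, giving $r(l(a)) = r(l(RaR))$. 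Then, since a reduced ring is semiprime, Corollary \ref{sara} (or directly Proposition \ref{ha}(1) together with the reverse inclusion) yields $P_a = r(l(RaR))$, and combining these gives the chain $P_a = r(l(RaR)) = r(l(a))$.

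Part (2) follows immediately from part (1) together with Corollary \ref{sara}: having shown $r(l(RaR)) = P_a$ for every $a \in R$, the equivalence in Corollary \ref{sara} applies and the class of $z^{\circ}$-ideals coincides with the class of right $d$-ideals.

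For part (3), I would argue both directions. If $RaR$ is a right annihilator ideal, then by Proposition \ref{ha}(5) (valid since $R$ is semiprime) every right annihilator ideal is a $z^{\circ}$-ideal, so $RaR$ is a $z^{\circ}$-ideal. Conversely, if $RaR$ is a $z^{\circ}$-ideal, then $P_a \subseteq RaR$; but always $a \in RaR$ forces $RaR \subseteq P_a$ is not automatic, so instead I would use part (1): since $a \in RaR$ and $RaR$ is a $z^{\circ}$-ideal we get $P_a \subseteq RaR$, while the reverse inclusion $RaR \subseteq r(l(RaR)) = P_a$ holds because any ideal is contained in the right annihilator of its left annihilator. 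Hence $RaR = P_a = r(l(RaR))$, exhibiting $RaR$ as a right annihilator ideal. The main obstacle is the careful verification of the identity $l(a) = l(RaR)$ in part (1) using reducedness; once that is in hand, the rest of the corollary is a direct application of the already-established Proposition \ref{ha} and Corollary \ref{sara}.
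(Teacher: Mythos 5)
There is a genuine gap in part (1), and it sits exactly where the real work of the corollary lies. Proposition \ref{ha} (applicable because a reduced ring is semiprime) gives only the inclusion $P_a\subseteq r(l(RaR))$. The reverse inclusion $r(l(RaR))\subseteq P_a$ is never proved in your proposal: Corollary \ref{sara} cannot supply it, because that corollary is an \emph{equivalence} --- it says $P_a=r(l(RaR))$ holds for all $a$ if and only if the $z^{\circ}$-ideals and right $d$-ideals coincide --- and neither side of that equivalence is yet known for reduced rings. Invoking Corollary \ref{sara} in part (1) and then deducing part (2) from part (1) via Corollary \ref{sara} is circular. Your parenthetical fallback, ``Proposition \ref{ha}(1) together with the reverse inclusion,'' merely names the missing inclusion without an argument. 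And the inclusion is not a formal consequence of semiprimeness: it is precisely where reducedness enters through minimal prime theory, which is why Corollary \ref{sara} is stated as an equivalence rather than as an identity valid in every semiprime ring.

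The paper closes this gap with a minimal-prime argument that your proposal would need to reproduce. Let $x\in r(l(RaR))$ and let $P$ be a minimal prime of $R$ containing $a$. By \cite[Lemma 3.1]{KLN}, in a reduced ring one has $l(a)=l(RaR)\not\subseteq P$; choose $y\in l(RaR)\setminus P$. Since $l(RaR)$ is a two-sided ideal and $l(RaR)x=0$, it follows that $yRx=0\subseteq P$ with $y\notin P$, and primeness of $P$ forces $x\in P$. Intersecting over all minimal primes containing $a$ gives $x\in P_a$, hence $r(l(RaR))\subseteq P_a$. The rest of your proposal is sound: your sketch of $l(a)=l(RaR)$ in a reduced ring is acceptable (the paper dismisses this equality as evident, and it is also covered by the cited lemma), part (2) is the same application of Corollary \ref{sara} the paper makes, and part (3) is correct once part (1) is repaired --- indeed your forward direction via Proposition \ref{ha}(5) is a slight legitimate variant of the paper's route, which instead reads $RaR=P_a$ directly off part (1) and uses that $P_a$, being an intersection of minimal primes, is a $z^{\circ}$-ideal.
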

\begin{proof}
(1) The second equality is evident. By Proposition \ref{ha}, we have $P_a\subseteq r(l(RaR))$. Now, suppose $x\in r(l(RaR))$ and let $P\in\bf Min(R)$ with $a\in P$. Then $l(RaR)x=0$ (i.e., $l(RaR)Rx=0$) and by \cite[Lemma 3.1]{KLN}, we have $l(a)=l(RaR)\not\subseteq P$. These conditions imply that $x\in P$, and hence $x\in P_a$.

(2) Follows from Part (1) and Corollary \ref{sara}.

(3) If $RaR$ is a right annihilator ideal, then $r(l(RaR))=RaR$. By Part (1), we have $P_a=r(l(RaR))$, which implies $P_a=RaR$, meaning that $RaR$ is a $z^{\circ}$-ideal. Conversely, suppose $RaR$ is  a $z^{\circ}$-ideal. Since $a\in RaR$, it follows that $P_a\subseteq RaR$. Again, by Part (1), we have $P_a=r(l(RaR))$, and thus $RaR=r(l(RaR))$.
\end{proof}
\begin{lem}\label{into}
Let $I$ be an ideal (right ideal) of a semiprime ring $R$, $a\in R$  and $l(I)\subseteq l(a)$. Then, for each $y\in R$, $l(Iy)\subseteq l(ay)$.
\end{lem}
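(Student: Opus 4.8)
The plan is to take an arbitrary $x \in l(Iy)$ and deduce $x \in l(ay)$, i.e.\ to pass from $xIy = 0$ to $xay = 0$. The engine of the argument will be the standard \emph{flip property} of semiprime rings: for $u, v \in R$ one has $uRv = 0$ if and only if $vRu = 0$. (Indeed, if $uRv = 0$ then every $t = vru$ satisfies $tRt = vr(uRv)ru = 0$, so $t = 0$ by semiprimeness, giving $vRu = 0$; the converse is symmetric.) Since this flip property is phrased for products of the shape $uRv$, the first task is to manufacture such products out of the bare hypothesis $xIy = 0$.

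First I would upgrade $xIy = 0$ to $(xi)Ry = 0$ for every $i \in I$. This is exactly where the (right) ideal hypothesis enters: for $i \in I$ and $r \in R$ we have $ir \in I$, hence $x(ir)y = 0$, which says precisely $(xi)Ry = 0$. Applying the flip property to the pair $xi,\ y$ then yields $yR(xi) = 0$ for every $i \in I$; fixing $r \in R$ this reads $(yrx)I = 0$, so $yRx \subseteq l(I)$. Now the hypothesis $l(I) \subseteq l(a)$ gives $yRx \subseteq l(a)$, that is, $yR(xa) = 0$. Flipping once more, applied to the pair $xa,\ y$, produces $(xa)Ry = 0$, i.e.\ $xaRy = 0$. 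Finally, because $R$ has an identity, taking the factor $1 \in R$ inside $xaRy = 0$ yields $xay = 0$, as required.

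The main obstacle is conceptual rather than computational: the hypothesis $xIy = 0$ records only the ``$r = 1$'' instance $xiy = 0$, whereas each application of the flip property needs the full product $(xi)Ry = 0$. The right-ideal closure $iR \subseteq I$ is exactly what bridges this gap, and the presence of the identity is what lets the argument close at the end, turning $xaRy = 0$ back into the single relation $xay = 0$. Note that semiprimeness is invoked twice, once for each flip; this reliance is consistent with its necessity as already reflected in Proposition~\ref{ha}.
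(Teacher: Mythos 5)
Your proof is correct and is essentially the paper's argument in elementwise packaging: your ``flip property'' $uRv=0 \Leftrightarrow vRu=0$ is exactly the mechanism behind the paper's two squared-ideal computations $(RyRxI)^2=0$ and $(RxaRy)^2=0$, and both proofs pivot through the same key intermediate $yRx\subseteq l(I)\subseteq l(a)$, using the right-ideal closure $IR\subseteq I$ and semiprimeness twice in the same places.
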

\begin{proof}
let $x\in l(Iy)$. Then $xIy=0$. Thus $RxIy=0$. This implies $(RyRxI)^2=RyRxIRyRxI=RyRxIyRx=0$.  By semiprime hypothesis, $RyRxI=0$. This says $RyRx\subseteq l(I)\subseteq l(a)$. Hence, $RyRxa=0$. This implies $(RxaRy)^2=0$, and hence by hypothesis, $RxaRy=0$, i.e., $xay=0$. Thefore, $x\in l(ay)$.
\end{proof} 
\begin{prop}
Let $I$ and $P$ be ideals of a reduced ring $R$ and $P$ be a prime ideal. If $I\cap P$ is a $z^{\circ}$-ideals, then either $I$ or $P$ is a $z^{\circ}$-ideal. In particular, if $P$ and $Q$ are prime ideals of $R$ which are not in a chain and $P\cap Q$ is a $z^{\circ}$-ideal, then both $P$ and $Q$ are $z^{\circ}$-ideals.
\end{prop}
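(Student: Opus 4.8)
The plan is to isolate a single engine and run it twice. The engine is the following \emph{Key Claim}: if $I\cap P$ is a $z^{\circ}$-ideal and $I\not\subseteq P$, then $P$ itself is a $z^{\circ}$-ideal. Granting this, the displayed statement is immediate: either $I\subseteq P$, in which case $I=I\cap P$ is a $z^{\circ}$-ideal, or $I\not\subseteq P$, in which case the Key Claim makes $P$ a $z^{\circ}$-ideal. For the \emph{in particular} assertion I would exploit that the Key Claim is genuinely stronger than the disjunction it yields: if $P,Q$ are primes not in a chain with $P\cap Q$ a $z^{\circ}$-ideal, then $Q\not\subseteq P$ lets me apply the Key Claim with $I=Q$ to conclude $P$ is a $z^{\circ}$-ideal, and symmetrically $P\not\subseteq Q$ (with $I=P$ and the prime taken to be $Q$) gives that $Q$ is a $z^{\circ}$-ideal, so both are.

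To prove the Key Claim I would fix $a\in I\setminus P$, take an arbitrary $b\in P$ and $u\in P_b$, and aim to show $u\in P$; since $b$ and $u$ are arbitrary this yields $P_b\subseteq P$ for every $b\in P$, i.e. $P$ is a $z^{\circ}$-ideal. The two facts I would lean on are Corollary \ref{12}(1), which in a reduced ring identifies $P_c=r(l(c))=r(l(RcR))$, and the reversibility of reduced rings (from $x^2=0\Rightarrow x=0$ one gets $pq=0\Rightarrow qp=0$, and then $pq=0\Rightarrow pRq=0$). From $u\in P_b=r(l(b))$ I read off $l(b)\subseteq l(u)$. The crucial step is to show $aRu\subseteq P_{ab}=r(l(ab))$: for $x\in l(ab)$ and $r\in R$, reversibility turns $xab=0$ into $bxa=0$, so $(xarb)^2=xar(bxa)rb=0$ and hence $xarb=0$; thus $xar\in l(b)\subseteq l(u)$, giving $xaru=0$ and therefore $aru\in r(l(ab))$. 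Once $aRu\subseteq P_{ab}$ is in hand, I note $ab\in I\cap P$ (as $a\in I$ and $b\in P$), so the $z^{\circ}$-hypothesis gives $P_{ab}\subseteq I\cap P\subseteq P$, whence $aRu\subseteq P$; primeness of $P$ together with $a\notin P$ then forces $u\in P$.

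The main obstacle is precisely the containment $aRu\subseteq P_{ab}$. In the noncommutative setting one cannot simply assert $P_{ab}=P_a\cap P_b$ and be done: while $P_{ab}\subseteq P_a\cap P_b$ is easy, the reverse inclusion (equivalently $l(ab)\subseteq l(a)+l(b)$) need not hold, so the minimal-prime route does not directly deliver $aru\in P_{ab}$. What rescues the argument is that I only ever need the special left multiple $aru$, not all of $P_a\cap P_b$, and the reduced-ring squaring identity $(xarb)^2=xar(bxa)rb$ lets me push the annihilation through the interposed factor $a$. A secondary point, logical rather than computational, is that the statement as phrased is a disjunction, yet to obtain \emph{both} primes in the second assertion I must record and use the stronger containment conclusion that the Key Claim actually proves, namely that failure of $P$ to be a $z^{\circ}$-ideal forces $I\subseteq P$.
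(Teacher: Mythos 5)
Your proposal is correct and follows essentially the same route as the paper's proof: both reduce to the claim that $I\not\subseteq P$ forces $P$ to be a $z^{\circ}$-ideal, both multiply by a fixed witness from $I\setminus P$ to produce elements of $I\cap P$, invoke the $z^{\circ}$-hypothesis through the identification $P_c=r(l(c))$ of Corollary \ref{12}, and finish with primeness of $P$. The only differences are cosmetic: you multiply on the left by $a\in I\setminus P$ and inline the reduced-ring computation (reversibility plus the squaring trick) where the paper multiplies on the right by $y\in I\setminus P$ and cites Lemma \ref{into}, and you make explicit the stronger ``Key Claim'' that the paper leaves implicit in its ``follows similarly'' for the two-prime case.
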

\begin{proof}
If $I\subseteq P$, then $I\cap P=I$ is a $z^{\circ}$-ideal. Suppose $I\not\subseteq P$, $a\in P$ and $l(RaR)\subseteq l(RxR)$. Take $y\in I\setminus P$. By Lemma \ref{into}, $l(RaRy)\subseteq l(RxRy)$. Since $RaRy\subseteq I\cap P$ and $I\cap P$ is a $z^{\circ}$-ideal, it follows from Corollary \ref{12} that $RxRy\subseteq I\cap P$. Consequently, $RxRy\subseteq P$, which implies 
$x\in P$. The proof of the second part of the proposition follows similarly. 
\end{proof}
\begin{exam}
Let $R=\begin{pmatrix}
  \Bbb Z_{6} & \Bbb Z_{6} \\
  0 & \Bbb Z_{6} \\
\end{pmatrix}$. Then, by Theorem \ref{3.6},  $I
=\begin{pmatrix}
  <\overline{3}> & \Bbb Z_{6} \\
  0 & <\overline{2}>\\
\end{pmatrix}$  is a $z^{\circ}$-ideal in $R$, where $<\overline{2}>$ and $<\overline{3}>$ are ideals generated by $\overline{2}$ and $\overline{3}$, respectively. However,  the ideal $I$ is not a right $d$-ideal. For, $A=\begin{pmatrix}
  \overline{3} & \overline{1} \\
  0 & 0\\
\end{pmatrix}\in I$ and we have $RA=\begin{pmatrix}
  <\overline{3}> & \Bbb Z_{6} \\
  0 & 0\\
\end{pmatrix}$. Thus $l(RAR)=l(RA)=\begin{pmatrix}
0 & \Bbb Z_{6}\\
  0 & \Bbb Z_{6}\\
\end{pmatrix}$ and this implies that $r(l(RAR))=\begin{pmatrix}
  \Bbb Z_{6} & \Bbb Z_{6} \\
  0 & 0 \\
\end{pmatrix}\not\subseteq I$.
\end{exam}

\section{$z^{\circ}$-ideals in extension rings}
In this section, we determine $z^{\circ}$-ideals in 2-by-2
generalized triangular matrix rings, full and upper triangular matrix rings.

For $n\in\Bbb N$, we call an ideal $I$ of $R$ a $z_{n}^{\circ}$-ideal if for every finite subset $F$ of $I$ with at most $n$ elements, we have $P_F\subseteq I$.
Clearly, $z_{1}^{\circ}$-ideals are the $z^{\circ}$-ideals. Moreover, an ideal $I$ of $R$ is a $sz^{\circ}$-ideal \ifif $I$ is a $z_{n}^{\circ}$-ideal for each $n\in\Bbb N$.  For $n>1$, every $z_{n}^{\circ}$-ideal is also a $z_{n-1}^{\circ}$-ideal, and hence a $z^{\circ}$-ideal. However, the converse does not necessarily hold. Indeed, there exists an example of a $z^{\circ}$-ideal (hence $z_{1}^{\circ}$-ideal) that is not an $sz^{\circ}$-ideal and consequently, not a $z_{n}^{\circ}$-ideal for some $n>1$, see \cite[Example 4.2]{AM}.
\begin{lem}\label{1}
Let $A=[a_{ij}]\in \bf M_{n}(R)$.\\ Then $P_A=\bf M_{n}(P_B)$, where $B=\{a_{ij}: 1\leq i, j\leq n\}.$
\end{lem}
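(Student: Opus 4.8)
The plan is to reduce everything to the classical ideal correspondence for full matrix rings. Recall the well-known fact (see, e.g., \cite{L1}) that the map $J\mapsto \mathbf{M}_n(J)$ is an inclusion-preserving bijection from the lattice $\mathrm{id}(R)$ of two-sided ideals of $R$ onto the lattice $\mathrm{id}(\mathbf{M}_n(R))$; every two-sided ideal of $\mathbf{M}_n(R)$ has the form $\mathbf{M}_n(J)$ for a unique ideal $J$ of $R$, the inverse being given by collecting the entries of the matrices in the ideal. So the first step is simply to record this order isomorphism of ideal lattices, which I will use to transport the computation of $P_A$ from $\mathbf{M}_n(R)$ down to $R$.

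The second step, which I expect to be the main (and only genuine) obstacle, is to show that this bijection restricts to a bijection between the minimal prime ideals of the two rings. The key point is that $\mathbf{M}_n(J)$ is a prime ideal of $\mathbf{M}_n(R)$ if and only if $J$ is a prime ideal of $R$. I would obtain this from the ring isomorphism $\mathbf{M}_n(R)/\mathbf{M}_n(J)\cong \mathbf{M}_n(R/J)$ together with the fact that a ring $S$ is prime exactly when the full matrix ring $\mathbf{M}_n(S)$ is prime (primeness being a Morita invariant, or by a direct argument with matrix units). Since $J\mapsto \mathbf{M}_n(J)$ is an order isomorphism carrying primes to primes, it must carry \emph{minimal} primes to minimal primes; hence the elements of $\mathbf{Min}(\mathbf{M}_n(R))$ are precisely the ideals $\mathbf{M}_n(P)$ with $P\in \mathbf{Min}(R)$.

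The third step is the elementary membership translation: for $A=[a_{ij}]$ and a minimal prime $P$ of $R$, we have $A\in \mathbf{M}_n(P)$ if and only if $a_{ij}\in P$ for all $i,j$, i.e.\ if and only if $B\subseteq P$, where $B=\{a_{ij}:1\le i,j\le n\}$. Consequently the minimal primes of $\mathbf{M}_n(R)$ containing $A$ are exactly the $\mathbf{M}_n(P)$ for which $P$ is a minimal prime of $R$ containing the set $B$.

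Finally, I would assemble these facts, using that $\mathbf{M}_n(-)$ commutes with arbitrary intersections (a matrix lies in every $\mathbf{M}_n(P_\alpha)$ iff each of its entries lies in every $P_\alpha$, i.e.\ in $\bigcap_\alpha P_\alpha$):
\[
P_A \;=\; \bigcap_{\substack{Q\in \mathbf{Min}(\mathbf{M}_n(R))\\ A\in Q}} Q
\;=\; \bigcap_{\substack{P\in \mathbf{Min}(R)\\ B\subseteq P}} \mathbf{M}_n(P)
\;=\; \mathbf{M}_n\!\Big(\bigcap_{\substack{P\in \mathbf{Min}(R)\\ B\subseteq P}} P\Big)
\;=\; \mathbf{M}_n(P_B),
\]
which is the desired identity. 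The first and last equalities are the definitions of $P_A$ (in $\mathbf{M}_n(R)$) and of $P_B$ (in $R$), the second uses the minimal-prime correspondence of Steps~2--3, and the third is the entrywise interchange of intersection with $\mathbf{M}_n(-)$. Apart from the primeness correspondence in Step~2, every step is routine bookkeeping.
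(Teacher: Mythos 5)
Your proof is correct and takes essentially the same route as the paper's: the ideal correspondence $J\mapsto \mathbf{M}_n(J)$, the induced bijection between $\mathbf{Min}(R)$ and $\mathbf{Min}(\mathbf{M}_n(R))$, and the entrywise interchange of $\mathbf{M}_n(-)$ with intersections. The only difference is cosmetic: you justify the prime/minimal-prime correspondence in detail (via $\mathbf{M}_n(R)/\mathbf{M}_n(J)\cong \mathbf{M}_n(R/J)$ and the order isomorphism of ideal lattices), whereas the paper asserts it outright alongside its citation of \cite[Theorem 3.1]{L}.
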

\begin{proof}
By \cite[Theorem 3.1]{L}, every ideal of $\bf M_{n}(R)$ is of the form $M_n(I)$ for some ideal $I$ of $R$. Moreover, $\bf M_n(I)$ is a minimal prime ideal of $\bf M_n(R)$ \ifif $I$ is a minimal prime ideal of $R$. By definition, we have: \[P_A=\bigcap_{A\in \bf M_n(P)\in\bf Min(\bf M_n(R))}\bf M_n(P)=\bf M_n(\bigcap_{A\in \bf M_n(P)\in\bf Min(\bf M_n(R))}P)=\]\[\bf M_n(\bigcap_{a_{ij}\in P\in\bf Min(R), 1\leq i, j\leq n}P)=\bf M_n(P_{\{a_{ij}: 1\leq i, j\leq n\}}).\]
\end{proof}
\begin{thm}\label{111}
An ideal $J$ of $\bf M_{n}(R)$ is a $z^{\circ}$-ideal \ifif $J=\bf M_n(I)$ for some $z_{n^2}^{\circ}$-ideal $I$ of $R$.
\end{thm}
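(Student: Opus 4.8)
The plan is to combine the structure theory of ideals of $\bf M_{n}(R)$ with Lemma \ref{1}, reducing the statement to an elementary bookkeeping argument about entry-sets of matrices. First I would invoke the structure theorem recalled in the proof of Lemma \ref{1} (from \cite[Theorem 3.1]{L}): every ideal $J$ of $\bf M_{n}(R)$ has the form $J=\bf M_{n}(I)$ for a unique ideal $I$ of $R$, and $\bf M_n$ is an order-preserving, order-reflecting bijection on ideals. Hence it suffices to prove that $\bf M_{n}(I)$ is a $z^{\circ}$-ideal of $\bf M_{n}(R)$ if and only if $I$ is a $z_{n^2}^{\circ}$-ideal of $R$; the uniqueness of $I$ then handles the ``for some'' in the statement automatically.

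Next I would rewrite the defining condition of a $z^{\circ}$-ideal on $J=\bf M_n(I)$ entry-wise. A matrix $A=[a_{ij}]$ lies in $\bf M_n(I)$ exactly when all of its entries lie in $I$, so its entry-set $B=\{a_{ij}:1\le i,j\le n\}$ is a finite subset of $I$ of cardinality at most $n^{2}$. By Lemma \ref{1} we have $P_A=\bf M_n(P_B)$, and since $\bf M_n(P_B)\subseteq \bf M_n(I)$ holds if and only if $P_B\subseteq I$, the requirement ``$P_A\subseteq J$ for every $A\in J$'' translates precisely into ``$P_B\subseteq I$ for the entry-set $B$ of every $A\in \bf M_n(I)$''.

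It then remains to match entry-sets with finite subsets of $I$ of size at most $n^{2}$. In one direction, the entry-set of any $A\in\bf M_n(I)$ is such a subset, so if $I$ is a $z_{n^2}^{\circ}$-ideal then $P_B\subseteq I$ for every $A$, giving that $J$ is a $z^{\circ}$-ideal. Conversely, given any subset $F=\{b_1,\dots,b_m\}\subseteq I$ with $m\le n^{2}$, I would place $b_1,\dots,b_m$ into $m$ of the $n^{2}$ entry-slots and fill the remaining slots with $0$; the resulting matrix lies in $\bf M_n(I)$ and has entry-set contained in $F\cup\{0\}$. If $J$ is a $z^{\circ}$-ideal this yields $P_{F\cup\{0\}}\subseteq I$, whence $P_F\subseteq I$, so that $I$ is a $z_{n^2}^{\circ}$-ideal.

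The only point requiring genuine care is the zero-padding step: I must observe that $P_{F\cup\{0\}}=P_F$, since every prime ideal contains $0$ and so a minimal prime contains $F\cup\{0\}$ exactly when it contains $F$. This guarantees that the auxiliary zero entries introduced to fit $F$ into an $n\times n$ array do not enlarge the relevant intersection, and likewise that the short entry-sets occurring when $m<n^{2}$ are faithfully captured. Beyond this observation, the proof is purely the correspondence $A\leftrightarrow(\text{entry-set of }A)$ together with Lemma \ref{1}, so I do not anticipate a substantive obstacle; the bound $n^{2}$ arises exactly because an $n\times n$ matrix has $n^{2}$ entries.
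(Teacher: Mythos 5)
Your proof is correct and takes essentially the same route as the paper's: both invoke the ideal structure theorem for $\mathbf{M}_n(R)$ from \cite[Theorem 3.1]{L} and then use Lemma \ref{1} to translate the $z^{\circ}$-condition on $\mathbf{M}_n(I)$ into the condition $P_F\subseteq I$ for entry-sets $F$ of size at most $n^2$. The only (harmless) difference is in fitting a set $F$ with fewer than $n^2$ elements into a matrix: you pad with zeros and justify this via $P_{F\cup\{0\}}=P_F$, while the paper tacitly repeats elements in its ``without loss of generality'' step; both devices are valid.
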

\begin{proof}
$\Rightarrow$ Let $J$ be a $z^{\circ}$-ideal of $\bf M_{n}(R)$. By \cite[Theorem 3.1]{L}, $J=\bf M_n(I)$ for some ideal $I$ of $R$. We claim that $I$ is a $z_{n^2}^{\circ}$-ideal of $R$. Let $F$ be a subset of $I$ with at most $n^2$ elements. Without loss of generality, we assume   $F=\{a_{11}, a_{12},....a_{1n}, a_{21}, a_{22}, ..., a_{2n}, ...., a_{n1}, a_{n2}, ..., a_{nn}\}$. Put $A= [a_{ij}]$, so that $A\in \bf M_{n}(R)$. By Lemma \ref{1}, we obtain 
$P_A=\bf M_n(P_F)$.  By hypothesis, $P_A\subseteq J$, which implies $P_F\subseteq I$. Hence, $I$ is a $z_{n^2}^{\circ}$-ideal of $R$.

$\Leftarrow$ Let $I$ be a $z_{n^2}^{\circ}$-ideal of $R$ and let $A=[a_{ij}]\in J=M_n(I)$. Define $F=\{a_{ij}: 1\leq i, j\leq n\}$.  By Lemma \ref{1},  $P_A= \bf M_n(P_F)$. Since $F\subseteq I$ and contains at most $n^2$ elements and $I$ is a $z_{n^2}^{\circ}$-ideal, we have $P_F\subseteq I$, which implies  $P_A=\bf M_n(P_F)\subseteq \bf {M_n(I)}=J$.
\end{proof}
\begin{exam}
Consider the ring $R=\Bbb Z_{8}$. Then, it is easy to see that $I=<\overline{2}>$ is a $z^{\circ}_{4}$-ideal of $R$. Now, by Theorem \ref{111}, $M_{2}(<\overline{2}>)$ is a $z^{\circ}$-ideal of $M_{2}(\Bbb Z_{8})$.
\end{exam}
By \cite[Theorem 3.2]{taheri}, every ideal of $\bf T_n(R)$ is of the following form: \[I=\begin{pmatrix}
  J_{11} & J_{12}&  J_{13}&. & . & . & J_{1n} \\
  0 & J_{22}& J_{23} & . & . &.& J_{2n} \\
  . & . & . & . & . & .&. \\
  . & . & . & . & . & .&. \\
  . & . & . & . & . & .&. \\
  0 & 0 & . & . & .&0 & J_{nn} \\
\end{pmatrix}, J_{ik}\subseteq J_{ik+1}\quad\text{and}\ J_{i+1 k}\subseteq
J_{ik}.\] It is easy to see that $I$ is a prime ideal of $\bf T_n(R)$ \ifif all $J_{ij}=R$ ($1\leq i, j\leq n$) except one of $J_{ii}$ which is a prime ideal of $R$.
\begin{lem}\label{2}
Let $A=[a_{ij}]\in \bf T_n(R)$. Then \[P_A=\begin{pmatrix}
  P_{a_{11}} & R&  R&. & . & . & R \\
  0 & P_{a_{22}}& R & . & . &.& R \\
  . & . & . & . & . & .&. \\
  . & . & . & . & . & .&. \\
  . & . & . & . & . & .&. \\
  0 & 0 & . & . & .& 0 & P_{a_{nn}} \\
\end{pmatrix}\]
\end{lem}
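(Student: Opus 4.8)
The plan is to combine the two structural facts recalled immediately before the statement---the description of all ideals of $\mathbf{T}_n(R)$ and the primeness criterion---with a direct entrywise computation of the intersection defining $P_A$.

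First I would identify the minimal prime ideals of $\mathbf{T}_n(R)$. By the stated criterion, a prime ideal is an ideal all of whose entries are $R$ except for one diagonal entry $J_{mm}$, which is a prime ideal $P$ of $R$; denote this ideal by $Q_{m,P}$. Comparing these, $P\subseteq P'$ gives $Q_{m,P}\subseteq Q_{m,P'}$, while $Q_{m,P}$ and $Q_{m',P'}$ are incomparable for $m\neq m'$ (each carries a proper prime in a slot where the other carries $R$). Hence the minimal prime ideals of $\mathbf{T}_n(R)$ are exactly the $Q_{m,P}$ with $1\leq m\leq n$ and $P\in \mathbf{Min}(R)$.

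Next I would record which of these contain $A=[a_{ij}]$. Since every entry of $Q_{m,P}$ other than the $(m,m)$-entry equals $R$, the condition $A\in Q_{m,P}$ reduces to the single requirement $a_{mm}\in P$. Therefore
\[ P_A=\bigcap_{m=1}^{n}\ \bigcap_{\substack{P\in \mathbf{Min}(R)\\ a_{mm}\in P}} Q_{m,P}. \]
Because each $Q_{m,P}$ is determined entrywise (by Theorem 3.2), this intersection is computed entrywise. At an off-diagonal slot $(i,k)$ with $i<k$ every factor contributes $R$, so the $(i,k)$-entry of $P_A$ is $R$. At a diagonal slot $(j,j)$, the factors with $m\neq j$ contribute $R$ and impose nothing, whereas the factors with $m=j$ contribute $P$ as $P$ ranges over all minimal primes of $R$ containing $a_{jj}$; their intersection is $P_{a_{jj}}$ by definition (with the usual convention that an empty intersection equals $R$, consistent with $P_{a_{jj}}=R$ when no minimal prime contains $a_{jj}$). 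This yields precisely the matrix displayed in the statement.

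The difficulty is bookkeeping rather than any substantial idea: the one point requiring care is that, for a fixed diagonal position $(j,j)$, only the minimal primes living in that same slot constrain the $(j,j)$-entry, and the constraint is exactly $a_{jj}\in P$; once this is pinned down, the diagonal entries collapse to $P_{a_{jj}}$ and the strictly-upper entries to $R$, giving the claim.
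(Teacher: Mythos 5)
Your proposal is correct and follows essentially the same route as the paper: both identify the minimal primes of $\mathbf{T}_n(R)$ as the ideals with a single minimal prime $P$ of $R$ in one diagonal slot and $R$ elsewhere, note that such an ideal contains $A$ exactly when $a_{mm}\in P$, and compute the defining intersection entrywise to get $P_{a_{jj}}$ on the diagonal and $R$ strictly above it. Your explicit check that these ideals are pairwise incomparable for distinct diagonal positions (hence genuinely minimal) is a small point the paper leaves implicit, but otherwise the two arguments coincide.
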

\begin{proof}
 Let $A=[a_{ij}]\in \bf T_n(R)$. Then  \[P_A=(\bigcap_{a_{11}\in P\in\bf Min(R)} \begin{pmatrix}P & R&  R&. & . & . & R \\
  0 & R& R & . & . &.& R \\
  . & . & . & . & . & .&. \\
  . & . & . & . & . & .&. \\
  . & . & . & . & . & .&. \\
  0 & 0 & . & . & .& 0 & R \\
\end{pmatrix})\bigcap(\bigcap_{a_{22}\in P\in\bf Min(R)} \begin{pmatrix}R & R&  R&. & . & . & R \\
  0 & P& R & . & . &.& R \\
  . & . & . & . & . & .&. \\
  . & . & . & . & . & .&. \\
  . & . & . & . & . & .&. \\
  0 & 0 & . & . & .& 0 & R \\
\end{pmatrix})...\]\[
\bigcap(\bigcap_{a_{nn}\in P\in\bf Min(R)}\begin{pmatrix}R & R&  R&. & . & . & R \\
  0 & R& R & . & . &.& R \\
  . & . & . & . & . & .&. \\
  . & . & . & . & . & .&. \\
  . & . & . & . & . & .&. \\
  0 & 0 & . & . & .& 0 & P \\
\end{pmatrix})=\begin{pmatrix}\bigcap_{a_{11}\in P\in\bf Min(R)}P & R&  R&. & . & . & R\\
  0 & R& R & . & . &.& R \\
  . & . & . & . & . & .&. \\
  . & . & . & . & . & .&. \\
  . & . & . & . & . & .&. \\
  0 & 0 & . & . & .& 0 & R \\
\end{pmatrix}\]\[\bigcap \begin{pmatrix}R & R&  R&. & . & . & R \\
  0 & \bigcap_{a_{22}\in P\in\bf Min(R)}P& R & . & . &.& R \\
  . & . & . & . & . & .&. \\
  . & . & . & . & . & .&. \\
  . & . & . & . & . & .&. \\
  0 & 0 & . & . & .& 0 & R \\
\end{pmatrix}...
\bigcap\begin{pmatrix}R & R&  R&. & . & . & R \\
  0 & R& R & . & . &.& R \\
  . & . & . & . & . & .&. \\
  . & . & . & . & . & .&. \\
  . & . & . & . & . & .&. \\
  0 & 0 & . & . & .& 0 & \bigcap_{a_{nn}\in P\in\bf Min(R)}P \\
\end{pmatrix}=\]\[\begin{pmatrix}P_{a_{11}} & R&  R&. & . & . & R \\
  0 & P_{a_{22}}& R & . & . &.& R \\
  . & . & . & . & . & .&. \\
  . & . & . & . & . & .&. \\
  . & . & . & . & . & .&. \\
  0 & 0 & . & . & .& 0 & P_{a_{nn}} \\
\end{pmatrix}.\]
\end{proof}
\begin{thm}\label{2}
The ideal $I=\begin{pmatrix}
  J_{11} & J_{12}&  J_{13}&. & . & . & J_{1n} \\
  0 & J_{22}& J_{23} & . & . &.& J_{2n} \\
  . & . & . & . & . & .&. \\
  . & . & . & . & . & .&. \\
  . & . & . & . & . & .&. \\
  0 & 0 & . & . & .&0 & J_{nn} \\
\end{pmatrix}$ of $\bf T_n(R)$ is a $z^{\circ}$-ideal \ifif each $J_{ii}$ ($1\leq i\leq n$) is a $z^{\circ}$-ideal and $J_{ij}=R$ for all $j>i$.
\end{thm}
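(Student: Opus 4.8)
The plan is to reduce everything to the entrywise description of $P_A$ supplied by the preceding lemma, which asserts that for $A=[a_{ij}]\in \bf T_n(R)$ the ideal $P_A$ is upper triangular with diagonal entries $P_{a_{11}},\dots,P_{a_{nn}}$ and with every strictly-upper entry equal to $R$. Since containment of one ideal of $\bf T_n(R)$ in another is checked block-by-block on the given triangular form, the whole statement becomes a matter of comparing $P_A$ entrywise against $I$.

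For the implication ($\Leftarrow$), I would assume each $J_{ii}$ is a $z^{\circ}$-ideal and $J_{ij}=R$ for $j>i$, and take an arbitrary $A=[a_{ij}]\in I$. Then $a_{ii}\in J_{ii}$, so $P_{a_{ii}}\subseteq J_{ii}$ because $J_{ii}$ is a $z^{\circ}$-ideal. Feeding this into the lemma, the diagonal entries of $P_A$ sit inside the corresponding $J_{ii}$, while each strictly-upper entry of $P_A$ equals $R=J_{ij}$. Hence $P_A\subseteq I$, and as $A\in I$ was arbitrary, $I$ is a $z^{\circ}$-ideal.

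For ($\Rightarrow$), I would assume $I$ is a $z^{\circ}$-ideal and first pin down the off-diagonal blocks: since $0\in I$, we get $P_0\subseteq I$, and by the lemma $P_0$ has every strictly-upper entry equal to $R$ (its diagonal entries being $P_0=P(R)$). Comparing the $(i,j)$-entry for $j>i$ gives $R\subseteq J_{ij}$, that is, $J_{ij}=R$. To show each diagonal block is a $z^{\circ}$-ideal, I would fix $i$ and $a\in J_{ii}$ and consider the matrix $aE_{ii}$ having $a$ in position $(i,i)$ and $0$ elsewhere; this lies in $I$ because its only nonzero entry lands in $J_{ii}$. Applying the lemma together with the $z^{\circ}$-hypothesis on $I$, the $(i,i)$-entry of $P_{aE_{ii}}\subseteq I$ yields $P_a\subseteq J_{ii}$; since $a$ was arbitrary, $J_{ii}$ is a $z^{\circ}$-ideal.

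The argument is essentially bookkeeping once the lemma is in hand, so I do not expect a deep obstacle; the one point deserving care is recognizing that the strictly-upper entries of $P_A$ are always the whole ring $R$, independently of $A$. This is precisely what forces $J_{ij}=R$ above the diagonal and is the structural feature distinguishing the triangular case — it reflects that the prime radical $P_0=P(R)$ of $\bf T_n(R)$, the smallest $z^{\circ}$-ideal, already fills the strictly-upper part. I would also verify that the single-entry test matrices $aE_{ii}$ genuinely belong to $I$, which is immediate from the given form of $I$, and that taking $A=0$ (rather than some arbitrary element) is legitimate for extracting the off-diagonal conclusion.
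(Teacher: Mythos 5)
Your proposal is correct and follows essentially the same route as the paper: both directions reduce to the entrywise formula for $P_A$ from the preceding lemma, with the zero matrix forcing $J_{ij}=R$ above the diagonal and a matrix supported at position $(i,i)$ forcing each $J_{ii}$ to be a $z^{\circ}$-ideal. The only (harmless) difference is your choice of test matrix $aE_{ii}$, which lies in $I$ unconditionally, whereas the paper places $1$'s in the strictly-upper entries and so implicitly relies on the already-established fact $J_{ij}=R$; your variant is marginally cleaner but the argument is the same.
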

\begin{proof}
$\Rightarrow$ First, we note that $[0]\in I$. by Lemma \ref{2}, this implies $P_{[0]}=[P_{ij}]\subseteq I$, where $P_{ii}=P_0$ for all $1\leq i\leq n$ and $P_{ij}=R$ whenever $j> i$. Consequently, we have $J_{ij}=R$ for all $j>i$. Next, we claim that $J_{ii}$ (for $1\leq i\leq n$) is a $z^{\circ}$-ideal. Fix $1\leq i\leq n$ and let $a\in J_{ii}$. Define the matrix $A=[a_{ij}]$ as follows: \\ $a$ occupies the (i,i) position,
  $a_{jj}=0$ for all $1\leq j\neq i\leq n$, $a_{ij}=0$ for $i>j$, and all other entries are 1.\\ Since $A\in I$,  Lemma \ref{2} gives us, $P_A=[P_{ij}]$, where $P_{ii}=P_a$, $P_{jj}=P_0$ for all $1\leq j\neq i\leq n$, $P_{ij}=0$ for $i>j$ and elsewhere is  $R$ . By hypothesis, $P_A\subseteq I$, which  implies $P_a\subseteq J_{ii}$. Thus $J_{ii}$ is a $z^{\circ}$-ideal.

$\Leftarrow$ Let $A=[a_{ij}]\in I$. By hypothesis, \[I=\begin{pmatrix}
  J_{11} & R&  R&. & . & . & R \\
  0 & J_{22}& R & . & . &.& R \\
  . & . & . & . & . & .&. \\
  . & . & . & . & . & .&. \\
  . & . & . & . & . & .&R \\
  0 & 0 & . & . & .&0 & J_{nn}\\
\end{pmatrix}.\]By Lemma \ref{2}, \[P_A=\begin{pmatrix}P_{a_{11}} & R&  R&. & . & . & R \\
  0 & P_{a_{22}}& R & . & . &.& R \\
  . & . & . & . & . & .&. \\
  . & . & . & . & . & .&. \\
  . & . & . & . & . & .&. \\
  0 & 0 & . & . & .& 0 & P_{a_{nn}} \\
\end{pmatrix}.\] Since for each $1\leq i\leq n$, $J_{ii}$ is a $z^{\circ}$-ideal, $P_{a_{ii}}\subseteq J_{ii}$. Hence $P_A\subseteq I$.
\end{proof}
\begin{exam}
 (1) By Theorem \ref{2}, the zero-ideal of $T_n(R)$ is not a $z^{\circ}$-ideal, even when $R$ is a semiprime ring.\\
  (2) Let $R$ be a semiprime  ring. Then the $0$ ideal is a $z^{\circ}$-ideal in $R$. Hence, by theorem \ref{2}, the following ideal is a $z^{\circ}$-ideal in $\bf T_n(R)$.\[\begin{pmatrix}0 & R&  R&. & . & . & R \\
  0 & 0& R & . & . &.& R \\
  . & . & . & . & . & .&. \\
  . & . & . & . & . & .&. \\
  . & . & . & . & . & .&R \\
  0 & 0 & . & . & .& 0 & 0 \\
\end{pmatrix}.\]
\end{exam}
 We are including the following lemma for
completeness since it is used in the next result. 
\begin{lem}\label{fat}
An ideal $J=\begin{pmatrix}
  I & N \\
  0 & L \\
\end{pmatrix}$  of $T=\begin{pmatrix}
  S & M \\
  0 & R \\
\end{pmatrix}$  is a prime ideal \ifif

(i) $N=M$.\\\indent (ii) $I=S$ and $L$ is a prime ideal of $R$
\\or $L=R$ and $I$ is a prime ideal of $S$.
\end{lem}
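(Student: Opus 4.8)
The plan is to reduce the statement to a question about the quotient ring $T/J$, using the standard fact that an ideal $J$ of a ring is prime if and only if the quotient is a prime ring. Because $J$ is an ideal of $T$, its data satisfy the usual compatibility conditions: $I$ is an ideal of $S$, $L$ is an ideal of $R$, $N$ is a sub-$(S,R)$-bimodule of $M$ (so in particular $N\subseteq M$), and $IM\subseteq N$, $ML\subseteq N$. Using precisely these containments, I would first verify that the map $\begin{pmatrix} s & m \\ 0 & r \end{pmatrix}\mapsto \begin{pmatrix} s+I & m+N \\ 0 & r+L\end{pmatrix}$ is a well-defined surjective ring homomorphism from $T$ onto the generalized triangular matrix ring $\begin{pmatrix} S/I & M/N \\ 0 & R/L\end{pmatrix}$, with kernel exactly $J$; hence
\[ T/J\cong \begin{pmatrix} S/I & M/N \\ 0 & R/L\end{pmatrix}. \]
This turns the lemma into the problem of characterizing when a generalized triangular matrix ring $T'=\begin{pmatrix} A & V \\ 0 & B\end{pmatrix}$ is prime.

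For necessity, suppose $T/J$ is prime, i.e. the ring $T'$ above is prime. First I would show $V=0$: the set $\begin{pmatrix} 0 & V \\ 0 & 0\end{pmatrix}$ is an ideal of $T'$ whose square is $0$, so by primeness it must be the zero ideal, giving $V=0$; back in $T$ this reads $M\subseteq N$, whence $N=M$, which is condition (i). Once $V=0$, the ring $T'$ is just the direct product $A\times B$, and a direct product of two rings is prime if and only if one factor is the zero ring and the other is prime (if both $A$ and $B$ were nonzero, the nonzero ideals $A\times 0$ and $0\times B$ would have zero product). Translating $A=S/I=0$ and $B=R/L=0$ back to $T$, this yields exactly condition (ii): either $I=S$ and $L$ is a (proper) prime ideal of $R$, or $L=R$ and $I$ is a (proper) prime ideal of $S$.

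For sufficiency, I would run the isomorphism in reverse: if $N=M$ and, say, $I=S$ with $L$ a prime ideal of $R$, then $M/N=0$ and $S/I=0$, so $T/J\cong R/L$, which is a prime ring, whence $J$ is prime; the symmetric case $L=R$, $I$ prime gives $T/J\cong S/I$.

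The main obstacle I anticipate is the first step, namely carefully checking that the bimodule structure on $M/N$ over $(S/I,R/L)$ is well defined and that the displayed map is a ring homomorphism with kernel $J$; this is exactly where the ideal conditions $IM\subseteq N$, $ML\subseteq N$, $SN\subseteq N$, $NR\subseteq N$ are used. A secondary point to keep honest is the convention that prime ideals are proper: this rules out the degenerate case $I=S$, $L=R$, $N=M$ (which would make $J=T$), and it is what forces the surviving factor $R/L$ or $S/I$ to be nonzero, i.e. the prime ideal to be proper. Alternatively, one can avoid quotients entirely and argue directly inside $T$ by testing primeness against the ideals $\begin{pmatrix} 0 & M \\ 0 & 0\end{pmatrix}$, $\begin{pmatrix} S & M \\ 0 & 0\end{pmatrix}$, $\begin{pmatrix} 0 & M \\ 0 & R\end{pmatrix}$, and, for the primeness of $L$ in $R$, against the ideals $\begin{pmatrix} S & M \\ 0 & A\end{pmatrix}$ as $A$ ranges over ideals of $R$.
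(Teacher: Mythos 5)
Your proposal is correct, but it follows a genuinely different route from the paper's proof. The paper argues entirely inside $T$, never forming a quotient: for necessity it notes $K=\begin{pmatrix} 0 & M\\ 0 & 0\end{pmatrix}$ is an ideal with $K^2=0\subseteq J$, forcing $K\subseteq J$ and hence $N=M$; it then multiplies the ideals $I_1=\begin{pmatrix} I & M\\ 0 & R\end{pmatrix}$ and $I_2=\begin{pmatrix} S & M\\ 0 & L\end{pmatrix}$, whose product lands in $J$, so primeness forces $I_1\subseteq J$ or $I_2\subseteq J$, i.e.\ $I=S$ or $L=R$; finally, when $I=S$, it tests primeness of $L$ against the ideals $H_i=\begin{pmatrix} 0 & M\\ 0 & L_i\end{pmatrix}$. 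This is exactly the ``alternative'' direct argument you sketch in your last sentence, so you had the paper's proof in reserve; your main line instead passes to $T/J\cong\begin{pmatrix} S/I & M/N\\ 0 & R/L\end{pmatrix}$ and characterizes prime generalized triangular matrix rings ($V=0$ via the square-zero ideal, then a product $A\times B$ is prime iff one factor vanishes and the other is prime). Each approach has a payoff. Your quotient route handles the sufficiency direction cleanly and uniformly (it is just $T/J\cong R/L$ or $T/J\cong S/I$), whereas the paper merely asserts the converse is ``easy to see''; it also makes the properness convention explicit, ruling out the degenerate case $I=S$, $L=R$, which the paper leaves implicit. It even sidesteps a small blemish in the paper: the displayed identity $I_1I_2=J$ is not quite right, since the top-right entry of $I_1I_2$ is $IM+ML$, which need not equal $M$ --- though only the containment $I_1I_2\subseteq J$ is actually used, so the paper's argument stands. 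The cost of your route is the bookkeeping you correctly flag: verifying that the $(S/I,R/L)$-bimodule structure on $M/N$ is well defined (this is where $IM\subseteq N$ and $ML\subseteq N$ enter) and that the kernel of the displayed map is exactly $J$; the paper's in-ring computation avoids all of that at the price of a case-by-case argument and an unproved converse.
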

\begin{proof}
$\Rightarrow$ By \cite[Proposition 1.17]{L}, $K=\begin{pmatrix}
  0 & M\\
  0 & 0 \\
\end{pmatrix}$ is an ideal of $T$. Since $K^2=0\subseteq J$, the hypothesis implies, $K\subseteq J$, which gives $N=M$. Again by \cite[Proposition 1.17]{L}, $I_1=\begin{pmatrix}
  I & M\\
  0 & R \\
\end{pmatrix}$ and $I_2=\begin{pmatrix}
  S & M\\
  0 & L \\
\end{pmatrix}$ are  ideals of $T$ and $I_1I_2=\begin{pmatrix}
  I & M\\
  0 & L \\
\end{pmatrix}$=J. Thus, we must have either  $I_1\subseteq J$ or $I_2\subseteq J$, which implies $I=S$ or $L=R$. Now, assume $I=S$. Let $L_1, L_2$ be two ideals of $R$ such that  $L_1L_2\subseteq L$. Then, $H_1=\begin{pmatrix}
  0 & M\\
  0 & L_1 \\
\end{pmatrix}$  and $H_2=\begin{pmatrix}
  0 & M\\
  0 & L_2 \\
\end{pmatrix}$ are two ideals of $T$  and  their product satisfies $H_1H_2\subseteq J$. Consequently, we must have either $H_1\subseteq J$ or $H_2\subseteq J$, which implies $L_1\subseteq L$ or $L_2\subseteq L$. Similarly, when $L=R$, we conclude  that $I$ is a $z^{\circ}$-ideal of $R$.

$\Leftarrow$ It is easy to see that whenever $I$ and $L$ are prime ideals of $S$ and $R$, respectively, the ideals $\begin{pmatrix}
  S & M\\
  0 & L\\
\end{pmatrix}$ and $\begin{pmatrix}
  I & M\\
  0 & R\\
\end{pmatrix}$, are prime ideals of $T$.
\end{proof}
\begin{lem}\label{3}
Let $A=\begin{pmatrix}
  a_{11} & a_{12}\\
  0 & a_{22} \\
\end{pmatrix}\in T$. Then $P_A=\begin{pmatrix}
  P_{a_{11}} & M \\
  0 & P_{a_{22}} \\
\end{pmatrix}$.
\end{lem}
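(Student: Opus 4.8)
The plan is to evaluate $P_A$ straight from its definition as the intersection of all minimal prime ideals of $T$ that contain $A$, reading off those primes from Lemma~\ref{fat}. First I would extract from Lemma~\ref{fat} the list of \emph{minimal} primes of $T$. By that lemma every prime ideal of $T$ has exactly one of the two shapes $\begin{pmatrix} S & M \\ 0 & L \end{pmatrix}$ with $L$ a prime ideal of $R$, or $\begin{pmatrix} I & M \\ 0 & R \end{pmatrix}$ with $I$ a prime ideal of $S$. A direct containment check shows that within the first shape $\begin{pmatrix} S & M \\ 0 & L \end{pmatrix}\subseteq\begin{pmatrix} S & M \\ 0 & L' \end{pmatrix}$ precisely when $L\subseteq L'$ (and dually for the second shape), while no prime of one shape can contain a prime of the other, since that would force $L=R$ or $I=S$, contradicting primeness. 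Hence the minimal primes of $T$ are exactly $\begin{pmatrix} S & M \\ 0 & L \end{pmatrix}$ with $L\in\mathbf{Min}(R)$ together with $\begin{pmatrix} I & M \\ 0 & R \end{pmatrix}$ with $I\in\mathbf{Min}(S)$.

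Next I would decide which of these contain $A$. Because the $(1,2)$, $(1,1)$ and $(2,2)$ slots of the two prime shapes are respectively $M$, $S$ and $R$, every entry of $A$ except one lies automatically in the relevant slot; concretely $A\in\begin{pmatrix} S & M \\ 0 & L \end{pmatrix}$ iff $a_{22}\in L$, and $A\in\begin{pmatrix} I & M \\ 0 & R \end{pmatrix}$ iff $a_{11}\in I$. This lets me split the defining intersection into two families,
\[
P_A=\Big(\bigcap_{a_{22}\in L\in\mathbf{Min}(R)}\begin{pmatrix} S & M \\ 0 & L \end{pmatrix}\Big)\cap\Big(\bigcap_{a_{11}\in I\in\mathbf{Min}(S)}\begin{pmatrix} I & M \\ 0 & R \end{pmatrix}\Big).
\]

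I would then compute each family entrywise. In the first family the $(1,1)$ and $(1,2)$ slots are constantly $S$ and $M$ and the $(2,2)$ slot is $\bigcap_{a_{22}\in L} L=P_{a_{22}}$, giving $\begin{pmatrix} S & M \\ 0 & P_{a_{22}} \end{pmatrix}$; symmetrically the second family equals $\begin{pmatrix} P_{a_{11}} & M \\ 0 & R \end{pmatrix}$. Intersecting these two slotwise produces $\begin{pmatrix} S\cap P_{a_{11}} & M \\ 0 & P_{a_{22}}\cap R \end{pmatrix}=\begin{pmatrix} P_{a_{11}} & M \\ 0 & P_{a_{22}} \end{pmatrix}$, which is the claimed value of $P_A$. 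The arithmetic here is routine; the one step deserving care is the reduction in the first paragraph, where I must be sure that the two families genuinely exhaust the minimal primes of $T$ and that each listed member really is minimal. This is the exact analogue of the minimal-prime bookkeeping carried out for the full and upper-triangular matrix rings earlier in the section.
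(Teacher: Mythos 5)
Your proof is correct and takes essentially the same route as the paper: both invoke Lemma~\ref{fat} to split $P_A$ into the two families of minimal primes $\begin{pmatrix} I & M \\ 0 & R \end{pmatrix}$ and $\begin{pmatrix} S & M \\ 0 & L \end{pmatrix}$ and then compute the intersection slotwise. Your explicit check that these families exhaust the minimal primes of $T$ (no cross-containments, and containment within a family tracks containment of the corner primes) is bookkeeping the paper leaves implicit, so it is a welcome but not divergent addition.
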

\begin{proof}
By Lemma \ref{fat}, we have \[P_A=(\bigcap_{a_{11}\in P\in\bf Min(S)}\begin{pmatrix}
  P & M\\
  0 & R \\
\end{pmatrix})\bigcap(\bigcap_{a_{22}\in P\in\bf Min(R)}\begin{pmatrix}
  S & M\\
  0 & P \\
\end{pmatrix})=\]\[\begin{pmatrix}
  \bigcap_{a_{11}\in P\in\bf Min(S)}P & M\\
  0 & \bigcap_{a_{22}\in P\in\bf Min(R)}P\\
\end{pmatrix}=\begin{pmatrix}
  P_{a_{11}} & M\\
  0 & P_{a_{22}} \\
\end{pmatrix}.\]
\end{proof}
The following result provides a characterization of $z^{\circ}$-ideals in a 2-by-2 generalized triangular matrix ring.
\begin{thm}\label{3.6}
Let $J=\begin{pmatrix}
  I & N \\
  0 & L \\
\end{pmatrix}$ be an ideal of $T=\begin{pmatrix}
  S & M \\
  0 & R \\
\end{pmatrix}$. \\\indent Then $J$ is a $z^{\circ}$-ideal of $T$ \ifif 

(i) $N=M$.\\\indent (ii) Two ideals $I, L$ are $z^{\circ}$-ideals of $S$ and $R$, respectively.
\end{thm}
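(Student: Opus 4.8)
The plan is to mirror the strategy used for the full upper-triangular case above, letting Lemma~\ref{3} do the essential computational work. The key observation is that for a generalized triangular matrix $A$, the prime-radical-type ideal $P_A$ is computed diagonal-slot by diagonal-slot, while its off-diagonal corner is pinned to the entire bimodule $M$. Consequently, the whole argument reduces to choosing suitable ``probe'' matrices inside $J$, applying the defining inclusion $P_A\subseteq J$, and then reading off the desired conditions on $I$, $L$, and $N$ from the individual entries.

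For the forward direction, I would first extract $N=M$. Since $J$ is an ideal it contains the zero matrix $[0]$, and Lemma~\ref{3} gives $P_{[0]}=\begin{pmatrix} P_0 & M \\ 0 & P_0 \end{pmatrix}$. The $z^{\circ}$-hypothesis forces $P_{[0]}\subseteq J$, and comparing the $(1,2)$-blocks yields $M\subseteq N$; since automatically $N\subseteq M$ (as $J\subseteq T$), this gives $N=M$. To see that $I$ is a $z^{\circ}$-ideal of $S$, I would take $a\in I$ and test $A=\begin{pmatrix} a & 0 \\ 0 & 0 \end{pmatrix}$, which lies in $J$ because $0\in N$ and $0\in L$; Lemma~\ref{3} then gives $P_A=\begin{pmatrix} P_a & M \\ 0 & P_0 \end{pmatrix}\subseteq J$, and the $(1,1)$-block yields $P_a\subseteq I$. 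A symmetric test with $\begin{pmatrix} 0 & 0 \\ 0 & b \end{pmatrix}$ for $b\in L$ shows $P_b\subseteq L$, so $L$ is a $z^{\circ}$-ideal of $R$.

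For the converse, I would assume $N=M$ and that $I,L$ are $z^{\circ}$-ideals, and take an arbitrary $A=\begin{pmatrix} a_{11} & a_{12} \\ 0 & a_{22} \end{pmatrix}\in J$, so that $a_{11}\in I$ and $a_{22}\in L$. Lemma~\ref{3} computes $P_A=\begin{pmatrix} P_{a_{11}} & M \\ 0 & P_{a_{22}} \end{pmatrix}$, and the $z^{\circ}$-property of $I$ and $L$ gives $P_{a_{11}}\subseteq I$ and $P_{a_{22}}\subseteq L$. Together with $M=N$ this yields $P_A\subseteq J$, and since $A$ was arbitrary, $J$ is a $z^{\circ}$-ideal.

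I do not anticipate a serious obstacle here, as Lemma~\ref{3} already isolates the behavior of $P_A$ on each diagonal slot and fixes the corner at $M$. The only point demanding a little care is the off-diagonal constraint $N=M$: it must be extracted from the zero matrix rather than from a diagonal probe, since every probe matrix already forces the corner of $P_A$ to be all of $M$ and so would give no information if $N=M$ were not established first. Once that is in place, both implications are a direct matching of the corresponding blocks.
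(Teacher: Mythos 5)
Your proposal is correct and follows essentially the same route as the paper: both arguments rest entirely on Lemma~\ref{3}, extracting $N=M$ from a probe matrix with zero diagonal (the paper uses $\begin{pmatrix} 0_S & n \\ 0 & 0_R \end{pmatrix}$ for $n\in N$, you use the zero matrix) and then testing diagonal entries (the paper with a single combined probe $\begin{pmatrix} a & n \\ 0 & b \end{pmatrix}$, you with two separate diagonal probes), with an identical converse. The only inaccuracy is your closing aside that a diagonal probe would give ``no information'' about $N$: in fact any probe $A\in J$ yields $M\subseteq N$ by comparing the $(1,2)$-blocks of $P_A\subseteq J$, so the ordering you insist on is unnecessary, though this does not affect the validity of your proof.
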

\begin{proof}
$\Rightarrow$ Let $n\in\ N$. Consider the matrix $A=\begin{pmatrix}
  0_{S} & n \\
  0 & 0_{R} \\
\end{pmatrix}\in J$. By hypothesi and Lemma \ref{3}, we have $P_A=\begin{pmatrix}
  P_0 & M \\
  0 & P_0 \\
\end{pmatrix}\subseteq J$. This implies that $M=N$. Now, let $a\in I$ and $b\in L$. Consider the matrix $B=\begin{pmatrix}
  a & n \\
  0 & b \\
\end{pmatrix}\in J$ for each $n\in M$.  By assumption, we must have $P_B\subseteq J$, which implies that $P_a\subseteq I$ and $P_b\subseteq L$. This shows that  $I$ and $L$  are $z^{\circ}$-ideals of $S$ and $R$, respectively.

$\Leftarrow$ Let $A=\begin{pmatrix}
  a & m \\
  0 & b\\
\end{pmatrix}\in \begin{pmatrix}
  I & M \\
  0 & L \\
\end{pmatrix}$. Then $a\in I$ and $b\in L$. By hypothesis, $P_a\subseteq I$, $P_b\subseteq L$ and by Lemma \ref{3}, $P_A=\begin{pmatrix}
  P_a & M \\
  0 & P_b\\
\end{pmatrix}$. Thus $P_A\subseteq J$.
\end{proof}

\section{On the lattice of annihilator ideals}

An ideal $I$ of a ring $R$ is a right (left) annihilator ideal if  r(l(I))=I (l(r(I))=I);
equivalently, $l(I)\subseteq l(x)$ ($r(I)\subseteq r(x)$) and $x\in R$, imply $x\in I$.

Put $rAnn(id(R))=\{I: I$ is a  right annihilator ideal of $R\}$. In \cite{B} and \cite{DT} it is shown that  $rAnn(id(R))$ is a complete lattice with the following operations.  \[I\vee J=r_R(l_R(I)\cap l_R(J))\quad \text{ and }\quad I\wedge J=I\cap J.\]
In this section, we try to investigate some new properties of this lattice.\\
In the following result some items are well-known. However, we bring their proofs. 
\begin{lem}\label{20}
The following statements are equivalent.
\begin{enumerate}
\item The ring $R$ is semiprime.

\item For every two ideals $I, J$ of $R$, $l(IJ)=l(I\cap J)$ ($r(IJ)=r(I\cap J)$).

\item For every two ideals $I, J$ of $R$, $r(l(IJ))=r(l(I\cap J))=r(l(I))\cap r(l(J))$.
\end{enumerate}
\end{lem}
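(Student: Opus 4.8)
The plan is to establish the cycle $(1)\Rightarrow(2)\Rightarrow(1)$ together with $(1)\Rightarrow(3)\Rightarrow(2)$, so that the three statements become equivalent. Throughout I will freely use the elementary annihilator identities $r(A+B)=r(A)\cap r(B)$ and $l(r(l(S)))=l(S)$, which hold in any ring, together with the semiprime criterion that $aRa=0$ forces $a=0$, and the fact (provable by the same criterion) that in a semiprime ring the two-sided annihilator of a two-sided ideal is symmetric, so that $l(K)=r(K)$ for a two-sided ideal $K$.

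For $(1)\Rightarrow(2)$ I only need the nontrivial inclusion $l(IJ)\subseteq l(I\cap J)$, since $IJ\subseteq I\cap J$ gives the reverse at once. Fixing $x\in l(IJ)$ and $a\in I\cap J$, I show $xa=0$ by applying the semiprime criterion to the element $xa$: one checks $aRxa\subseteq IJ$, because $aRx\subseteq I$ while the trailing factor $a$ lies in $J$, whence $(xa)R(xa)=x(aRxa)\subseteq x(IJ)=0$ and therefore $xa=0$. The parenthetical right-handed version follows by the symmetric argument.

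The two easy implications come next. For $(2)\Rightarrow(1)$, if $A^2=0$ then applying $(2)$ with $I=J=A$ gives $R=l(A^2)=l(A\cap A)=l(A)$, so $A=1\cdot A\subseteq RA=0$; hence $R$ has no nonzero nilpotent ideal and is semiprime. The implication $(3)\Rightarrow(2)$ is purely formal: applying $l$ to the first equality in $(3)$ and using $lrl=l$ collapses $r(l(IJ))=r(l(I\cap J))$ to $l(IJ)=l(I\cap J)$.

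The substance of the lemma is $(1)\Rightarrow(3)$. The first equality $r(l(IJ))=r(l(I\cap J))$ is immediate from $(2)$. For the second, writing $r(l(I))\cap r(l(J))=r(l(I)+l(J))$ and noting $l(I),l(J)\subseteq l(I\cap J)$ yields $r(l(I\cap J))\subseteq r(l(I))\cap r(l(J))$ directly. The hard direction, which I expect to be the main obstacle, is the reverse inclusion $r(l(I))\cap r(l(J))\subseteq r(l(I\cap J))=r(l(IJ))$ — equivalently, that the annihilator closure $r(l(-))$ preserves the binary intersection $I\cap J$. To prove it I fix $z\in r(l(I))\cap r(l(J))$ and $w\in l(IJ)$ and argue $wz=0$ through $(wz)R(wz)=0$. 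The key is that $z\in r(l(J))$ together with the symmetry of annihilators in a semiprime ring gives $z\,l(J)=0$; combined with $wI\subseteq l(J)$ (read off from $wIJ=0$) and the fact that $l(J)$ is a left ideal, this yields $zRwI\subseteq z\,l(J)=0$, i.e. $zRw\subseteq l(I)$. Then $l(I)z=0$ gives $(wz)R(wz)=w(zRw)z\subseteq w\,l(I)\,z=0$, and semiprimeness forces $wz=0$. I would isolate the passage from $l(J)z=0$ to $z\,l(J)=0$ — proved by the semiprime criterion applied to $zk$ for $k\in l(J)$ — as a short preliminary observation, since it is the delicate point on which the whole inclusion turns.
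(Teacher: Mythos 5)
Your proof is correct, and while it shares the paper's basic strategy --- elementwise annihilator computations driven by the semiprime criterion $aRa=0\Rightarrow a=0$ --- it differs from the paper's proof in two genuine ways. First, the implication graph: the paper proves $(1)\Rightarrow(2)\Rightarrow(1)$, $(2)\Rightarrow(3)$ and closes with $(3)\Rightarrow(1)$ (from $I^2=0$ it gets $r(l(I))=r(l(I^2))=0$, hence $l(I)=l(r(l(I)))=R$, hence $I=0$), whereas you derive $(3)\Rightarrow(2)$ purely formally by applying $l(-)$ to the first equality of $(3)$ and using $l(r(l(S)))=l(S)$; both closures are equally short, and your proving $(1)\Rightarrow(3)$ where the paper proves $(2)\Rightarrow(3)$ is immaterial once $(1)\Leftrightarrow(2)$ is in hand (the paper in fact uses semiprimeness inside its $(2)\Rightarrow(3)$ anyway, via the already-established $(2)\Rightarrow(1)$). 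Second, and more substantively, in the hard inclusion $r(l(I))\cap r(l(J))=r\bigl(l(I)+l(J)\bigr)\subseteq r(l(I\cap J))$ the paper never leaves the one-sided setting: taking $x\in r(l(I)+l(J))$ and $r\in l(I\cap J)=l(IJ)$, it runs a chain of squaring arguments ($(RxRrI)^2=0$ forces $RxRrI=0$, then $(RrRxR)^2=0$ forces $RrRxR=0$) to reach $rx=0$. You instead isolate the symmetry lemma that $l(K)=r(K)$ for a two-sided ideal $K$ of a semiprime ring, apply it to $K=l(J)$ to convert $z\in r(l(J))$ into $z\,l(J)=0$, and then finish with a single application of the criterion to $wz$ via $(wz)R(wz)=w(zRw)z\subseteq w\,l(I)\,z=0$ --- the symmetry lemma absorbs one of the paper's squaring steps. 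Similarly, your $(1)\Rightarrow(2)$ applies the criterion directly to $xa$ through $aRxa\subseteq IJ$, where the paper squares $RrRsR$. The paper's route avoids stating any auxiliary lemma at the cost of longer nilpotency computations; yours is shorter and cleaner once the (standard, and correctly sketched) symmetry fact is recorded, and that fact is independently useful elsewhere in this circle of ideas.
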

\begin{proof}
(1)$\Rightarrow$(2) Clearly $l(I\cap J)\subseteq l(IJ)$, since $IJ\subseteq I\cap J$.  For the other inclusion, consider $r\in l(IJ)$ and $s\in I\cap J$. Then $RsRsR\subseteq IJ$ and $RrR\subseteq l(IJ)$ and  hence we have  $(RrRsR)^2=RrRsRrRsR\subseteq RrRsRsR=0$. By assumption, this shows that $RrRsR=0$ and thus $rs=0$. Therefore,  $r\in l(I\cap J)$. So we are done.

(2)$\Rightarrow$(1) Let $I$ be an ideal of $R$ and $I^2=0$. Then we have  \[l(I)=l(I\cap I)=l(I^2)=R.\] This implies that  $I=0$.

(2)$\Rightarrow$(3) The first equality is clear by hypothesis. Since $I\cap J\subseteq I, J$, it follows that $r(l(I\cap J))\subseteq r(l(I))\cap r(l(J))$. For the reverse inclusion, we first observe that:
 \[r(l(I))\cap r(l(J))=r(l(I)+ l(J)).\]
Now, assume $x\in r(l(I)+l(J))$ and $r\in l(I\cap J)=l(IJ)$.  Then, we have $RrIJ=0$, which implies \[RrI\subseteq l(J)\subseteq l(I)+ l(J).\]  Thus, $RrIxR=0$. This implies that $(RxRrI)^2=0$. By hypothesis, this leads to $RxRrI=0$. Therefore, $RxRr\subseteq l(I)\subseteq l(I)+ l(J)$. Since $Rx\subseteq r(l(I)+l(J))$, we conclude that $RxRrRx=0$. This further implies $(RrRxR)^2=0$,  and by hypothesis, $RrRxR=0$. Consequently, $rx=0$, meaning $x\in r(l(I\cap J))$. Thus, the proof is complete.

(3)$\Rightarrow$(1) Let $I$ be an ideal of $R$ and $I^2=0$. Then  $r(l(I))=r(l(I\cap I))=r(l(I^2))=0$. Thus $l(I)=l(r(l(I)))=R$, i.e., $I=0$.
\end{proof}
\begin{lem}\label{KH}
Let $R$ be a semiprime. Then,   for any subset $\{I_{\alpha}: \alpha\in S\}$ of ideals of $R$ and any right annihilator ideal $J$ of $R$, we have \[J\cap (r(l(\sum_{\alpha\in S}I_{\alpha})))=r(l(\sum_{\alpha\in S}I_{\alpha}\cap J)).\]
\end{lem}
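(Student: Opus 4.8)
The plan is to abbreviate $I=\sum_{\alpha\in S}I_\alpha$ and $K=\sum_{\alpha\in S}(I_\alpha\cap J)$, both two-sided ideals, so that the asserted identity reads $J\cap r(l(I))=r(l(K))$. The inclusion $r(l(K))\subseteq J\cap r(l(I))$ I expect to be routine: since $K\subseteq I$ and $K\subseteq J$, monotonicity of the closure operator $r(l(\cdot))$ gives $r(l(K))\subseteq r(l(I))$ and $r(l(K))\subseteq r(l(J))$, and because $J$ is a right annihilator ideal we have $r(l(J))=J$, whence $r(l(K))\subseteq J\cap r(l(I))$.

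The substance is the reverse inclusion $J\cap r(l(I))\subseteq r(l(K))$. The key observation I would isolate first is that for any $x\in J$ one has $xI\subseteq K$: indeed, for each $\alpha$ one has $xI_\alpha\subseteq I_\alpha$ because $I_\alpha$ is an ideal, while $xI_\alpha\subseteq xR\subseteq J$ because $x\in J$ and $J$ is a right ideal, so $xI_\alpha\subseteq I_\alpha\cap J$, and summing over $\alpha$ yields $xI\subseteq K$. With this in hand, fix $x\in J\cap r(l(I))$ and an arbitrary $y\in l(K)$; the goal is to show $yx=0$, for then $l(K)x=0$, i.e. $x\in r(l(K))$. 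Put $t=yx$. From $xI\subseteq K$ and $yK=0$ I obtain $tI=y(xI)\subseteq yK=0$, so $t\in l(I)$. On the other hand $x\in r(l(I))$, and $r(l(I))$ is a two-sided (in particular left) ideal, so $t=yx\in r(l(I))$, i.e. $l(I)t=0$. Combining these, $t\in l(I)$ forces $tR\subseteq l(I)$, hence $tRt\subseteq l(I)t=0$, and semiprimeness ($aRa=0\Rightarrow a=0$) gives $t=0$, as desired.

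The step I expect to be the main obstacle is precisely this reverse inclusion, and more specifically resisting the temptation to prove it by reducing to the two-ideal identity $r(l(I\cap J))=r(l(I))\cap r(l(J))$ furnished by Lemma \ref{20}. That shortcut fails because $K=\sum_{\alpha\in S}(I_\alpha\cap J)$ is in general strictly smaller than $I\cap J$: an element of $I\cap J$ is a finite sum of elements of the various $I_\alpha$ that need not individually lie in $J$, so it cannot be split into pieces of $I_\alpha\cap J$, and consequently $l(K)\not\subseteq l(I\cap J)$ in general. The observation $xI\subseteq K$ for $x\in J$ is exactly what repairs this gap, and the reason it is available is that the element $x$ fed in already lies in $J$; it converts membership of $x$ in $J$ into a containment on which $y\in l(K)$ can act directly. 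I would verify only the small structural facts that $l(I)$ and $r(l(I))$ are two-sided ideals, so that the one-sided manipulations $tR\subseteq l(I)$ and $yx\in r(l(I))$ are legitimate; both are immediate from $I$ being two-sided.
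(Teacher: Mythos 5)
Your proof is correct and is essentially the paper's own argument in streamlined form: your key observation that $xI\subseteq K$ for $x\in J$ is the same use of the two-sidedness of $J$ that the paper makes when it shows $J\cap l\bigl(\sum_{\alpha}(J\cap I_{\alpha})\bigr)\subseteq l(I_{\alpha})$ for each $\alpha$, and your finishing step $tRt\subseteq l(I)t=0$ plus semiprimeness is exactly the mechanism behind the paper's zero-intersection claims $J\cap I_{\alpha}\cap l(K)=0$ and $J\cap l(K)\cap r\bigl(l(\sum_{\alpha}I_{\alpha})\bigr)=0$. The difference is purely organizational (you run a single element chase with $t=yx$, whereas the paper first proves the ideal containment $J\cap l(K)\subseteq l\bigl(\sum_{\alpha}I_{\alpha}\bigr)$ and then concludes), and your cautionary remark about $\sum_{\alpha}(I_{\alpha}\cap J)$ versus $\bigl(\sum_{\alpha}I_{\alpha}\bigr)\cap J$ correctly identifies why Lemma \ref{20} alone does not suffice.
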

\begin{proof}
  $\Rightarrow$ Since $J$ is a right annihilator ideal, we have \[r(l(\sum_{\alpha\in S}I_{\alpha}\cap J))\subseteq J\cap (r(l(\sum_{\alpha\in S}I_{\alpha}))).^{(1)}\] On the other hand, we have \[\sum_{\alpha\in S}(J\cap I_{\alpha})\subseteq r(l(\sum_{\alpha\in S}(J\cap I_{\alpha})).\] This implies that for each $\alpha\in S$, we have $J\cap I_{\alpha}\subseteq r(l(\sum_{\alpha\in S}(J\cap I_{\alpha}))$. By hypothesis, this implies  \[J\cap I_{\alpha}\cap l(\sum_{\alpha\in S}J\cap I_{\alpha})=0, \quad\text{for each}\quad \alpha\in S,\] and hence $J\cap l(\sum_{\alpha\in S}J\cap I_{\alpha})\subseteq l(I_{\alpha})$, for each $\alpha\in S$. Taking the intersection over all $\alpha$ we obtain: \[J\cap l(\sum_{\alpha\in S}J\cap I_{\alpha})\subseteq \bigcap_{\alpha\in S}l(I_{\alpha})=l(\sum_{\alpha\in S}I_{\alpha}).\] This leads to $J\cap l(\sum_{\alpha\in S}J\cap I_{\alpha})\cap r(l(\sum_{\alpha\in S}I_{\alpha}))=0$. Thus, \[J\cap(r(l(\sum_{\alpha\in S}I_{\alpha}))\subseteq r(l(\sum_{\alpha\in S}J\cap I_{\alpha})).^{(2)}\] Combining (1) and (2), we establish the required equality.
\end{proof}
\begin{thm}\label{H}
Let $R$ be a semiprime. Then   $<rAnn(id(R)), \subseteq>$ is a frame.
\end{thm}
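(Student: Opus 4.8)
The plan is to verify the single infinitary distributivity law that defines a frame, and to reduce it directly to Lemma \ref{KH}. The completeness of $\langle rAnn(id(R)),\subseteq\rangle$ is already available from \cite{B} and \cite{DT}, so the only remaining task is to establish
\[
\left(\bigvee_{\alpha\in S} I_\alpha\right)\wedge J \;=\; \bigvee_{\alpha\in S}\bigl(I_\alpha\wedge J\bigr)
\]
for an arbitrary family $\{I_\alpha:\alpha\in S\}$ of right annihilator ideals and a single right annihilator ideal $J$.

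First I would pin down how the arbitrary meet and join are computed in this lattice. The meet is intersection, $\bigwedge_{\alpha} I_\alpha=\bigcap_\alpha I_\alpha$, since an intersection of right annihilator ideals is again a right annihilator ideal (each $I_\alpha=r(l(I_\alpha))$, and a family of such sets intersects to a right annihilator ideal). For the join I would use that $r(l(-))$ is a closure operator whose fixed points are exactly the right annihilator ideals, together with $l(\sum_\alpha I_\alpha)=\bigcap_\alpha l(I_\alpha)$; hence the smallest right annihilator ideal containing every $I_\alpha$ is
\[
\bigvee_{\alpha\in S} I_\alpha \;=\; r\!\left(l\!\left(\sum_{\alpha\in S} I_\alpha\right)\right),
\]
which for two elements specializes to the stated binary formula $I\vee J=r(l(I)\cap l(J))$.

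With these descriptions in hand, the left-hand side of the distributive law becomes $J\cap r\bigl(l(\sum_\alpha I_\alpha)\bigr)$. On the right-hand side, each $I_\alpha\wedge J=I_\alpha\cap J$ is again a right annihilator ideal, so the join $\bigvee_\alpha(I_\alpha\cap J)$ equals $r\bigl(l(\sum_\alpha (I_\alpha\cap J))\bigr)$. Thus the identity to be proved is exactly
\[
J\cap r\!\left(l\!\left(\sum_{\alpha\in S} I_\alpha\right)\right) \;=\; r\!\left(l\!\left(\sum_{\alpha\in S}(I_\alpha\cap J)\right)\right),
\]
which is precisely the content of Lemma \ref{KH}, valid since $R$ is semiprime. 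This completes the verification.

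The only genuine obstacle is the bookkeeping in the first step, namely confirming that the supremum in $rAnn(id(R))$ is computed as $r(l(\sum_\alpha I_\alpha))$ and not as some strictly larger closure; all the real semiprime-ring work has already been absorbed into Lemma \ref{KH}. Once the join is identified correctly, the theorem is an immediate restatement of that lemma, and I would remark that the construction uses only $J\in rAnn(id(R))$ while the $I_\alpha$ may be taken to be arbitrary ideals, which is why Lemma \ref{KH} applies verbatim.
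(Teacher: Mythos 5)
Your proof is correct and follows essentially the same route as the paper: the paper's entire argument is the chain $J\wedge(\bigvee_{\alpha}I_{\alpha})=J\cap r\bigl(l\bigl(\sum_{\alpha}I_{\alpha}\bigr)\bigr)=r\bigl(l\bigl(\sum_{\alpha}(J\cap I_{\alpha})\bigr)\bigr)=\bigvee_{\alpha}(J\wedge I_{\alpha})$, i.e.\ exactly the reduction to Lemma~\ref{KH} that you carry out. Your extra bookkeeping identifying the arbitrary join as $r\bigl(l\bigl(\sum_{\alpha}I_{\alpha}\bigr)\bigr)$ (via the closure-operator property of $r(l(-))$) is a point the paper leaves implicit, so it is a welcome addition rather than a deviation.
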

\begin{proof}
Let $J$ and the family $\{I_{\alpha}: \alpha\in S\}$ be right annihilator ideals. Then by Lemma \ref{KH},  \[J\wedge (\vee_{\alpha\in S}I_{\alpha})=J\cap (r(l(\sum_{\alpha\in S}I_{\alpha}))=r(l(\sum_{\alpha\in S}(J\cap I_{\alpha})))=\bigvee_{\alpha\in S}(J\wedge I_{\alpha}).\] This shows that $rAnn(id(R))$ is a frame.
\end{proof}
\begin{lem}\label{21}
Let $R$ be a semiprime ring.  Then the compact elements of $rAnn(id(R))$ are precisely the
ideals of the form $r(l(Ra_1R))\vee...\vee r(l(Ra_nR))$, for some finitely many elements $a_i\in R$. Indeed, \[\mathfrak{k}({rAnn(id(R))})=\{r(l(Ra_1R))\vee...\vee r(l(Ra_nR)): a_1, ..., a_n\in R, n\in\Bbb N\}.\]
\end{lem}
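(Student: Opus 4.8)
The plan is to prove the asserted equality by establishing the two inclusions separately, after recording one join representation that I will use in both halves. The representation is: for every right annihilator ideal $K$ one has $K=\bigvee_{a\in K}r(l(RaR))$. Indeed, since $R$ has an identity, $a\in RaR\subseteq r(l(RaR))$ for each $a$, so $K=\sum_{a\in K}RaR\subseteq\sum_{a\in K}r(l(RaR))$; conversely each $RaR\subseteq K$ gives $r(l(RaR))\subseteq r(l(K))=K$, whence $\sum_{a\in K}r(l(RaR))\subseteq K$. Applying $r(l(-))$ and using $r(l(K))=K$ yields $\bigvee_{a\in K}r(l(RaR))=r\big(l(\sum_{a\in K}r(l(RaR)))\big)=K$.

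With this in hand, the inclusion $\mathfrak{k}(rAnn(id(R)))\subseteq\{r(l(Ra_1R))\vee\cdots\vee r(l(Ra_nR))\}$ is the easy half. If $K$ is compact, then $K=\bigvee_{a\in K}r(l(RaR))$ exhibits $K$ as a join of lattice elements, so compactness produces finitely many $a_1,\dots,a_n\in K$ with $K\leq r(l(Ra_1R))\vee\cdots\vee r(l(Ra_nR))$. Since each $r(l(Ra_iR))\subseteq K$, the reverse containment is automatic, and equality follows; thus $K$ has the claimed form.

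For the inclusion $\supseteq$ I would first reduce to the single generators: finite joins of compact elements are compact in any complete lattice, so it suffices to show that each $r(l(RaR))$ is compact. This is the heart of the matter and the step I expect to be the main obstacle. To attack it, suppose $r(l(RaR))\leq\bigvee_{\beta}K_\beta=r(l(\sum_\beta K_\beta))$ with the $K_\beta$ right annihilator ideals. Applying $l(-)$, which reverses inclusions and satisfies $lrl=l$, turns this into $l(\sum_\beta K_\beta)\subseteq l(RaR)$, that is $\bigcap_\beta l(K_\beta)\subseteq l(RaR)$; since $\bigcap_\beta l(K_\beta)$ is a two-sided ideal of the semiprime ring $R$, this is equivalent to $\big(\bigcap_\beta l(K_\beta)\big)a=0$. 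The goal is then to extract a \emph{finite} subfamily $F$ with $\bigcap_{\beta\in F}l(K_\beta)\subseteq l(RaR)$, which re-translates (apply $r$) to $r(l(RaR))\leq\bigvee_{\beta\in F}K_\beta$ and finishes the argument.

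Passing from an inclusion of an arbitrary intersection of left annihilators into $l(RaR)$ down to a finite subintersection is precisely the delicate point, and I would not expect it to be purely formal. The tools I would try to bring to bear are the semiprime characterizations of Lemma \ref{20}, the modular-law computation of Lemma \ref{KH}, and the frame distributivity of Theorem \ref{H} in the localized form $r(l(RaR))=\bigvee_\beta\big(r(l(RaR))\wedge K_\beta\big)$; the plan would be to use distributivity to concentrate the whole question around the single element $a$, and then to squeeze the resulting join down to finitely many indices using the fact that $RaR$ is generated by one element. I expect this finiteness extraction, rather than any of the Galois-connection bookkeeping above, to be the genuinely hard and most error-prone part of the proof, and it is the step I would check most carefully.
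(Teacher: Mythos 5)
Your first inclusion is correct and coincides with the paper's argument for that direction: from $K=\bigvee_{a\in K}r(l(RaR))$ and compactness of $K$ you extract finitely many $a_i\in K$ with $K\leq r(l(Ra_1R))\vee\cdots\vee r(l(Ra_nR))$, and the reverse containment is automatic since each $r(l(Ra_iR))\subseteq K$. The genuine gap is exactly the step you flagged and deferred: you never prove that $r(l(RaR))$ is compact, only that you would try Lemmas \ref{20}, \ref{KH} and Theorem \ref{H} to extract a finite subfamily $F$ with $\bigcap_{\beta\in F}l(K_\beta)\subseteq l(RaR)$. No such argument can exist, because the claim is false in general. Take $R=\prod_{n\in\mathbb{N}}\mathbb{Z}_2$, a reduced (hence semiprime) commutative ring, and for $A\subseteq\mathbb{N}$ put $I_A=\{x\in R: x_n=0 \text{ for all } n\notin A\}$. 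Every annihilator ideal of $R$ has this form (for $S\subseteq R$, $l(S)=I_{\mathbb{N}\setminus T}$ where $T$ is the union of the supports of elements of $S$), so $I_A\mapsto A$ identifies $rAnn(id(R))$ with the powerset of $\mathbb{N}$. The family $\{I_F: F\subseteq\mathbb{N}\ \text{finite}\}$ has join $r(l(\sum_F I_F))=r(0)=R=r(l(R1R))$, yet every finite subjoin equals $I_G$ for some finite $G$ and is proper. Hence $r(l(R1R))$ is not compact, the right-hand side of the lemma contains non-compact elements (any $r(l(RaR))$ with $a$ of infinite support), and the asserted equality fails: only the inclusion you actually proved holds.

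For comparison, the paper's own proof founders at precisely the point where you stalled. After (legitimately) reducing to directed families, it infers $r(l(RaR))\subseteq\bigcup_{\alpha\in S}I_\alpha$ from $r(l(RaR))\leq\bigvee_{\alpha\in S}I_\alpha$, thereby conflating the directed join in $rAnn(id(R))$, which is $r(l(\bigcup_\alpha I_\alpha))$, with the set-theoretic union $\bigcup_\alpha I_\alpha$; the double-annihilator closure can strictly enlarge a directed union, as the example above shows. So your instinct was sound in a stronger sense than you suspected: the finiteness extraction is not Galois-connection bookkeeping, it is the entire content of the claim, and it is irrecoverable rather than merely delicate. Note also that the failure propagates: the compactness of the top element $R=r(l(R1R))$, which Theorem \ref{23} invokes via this lemma to get coherence for reduced rings, fails in the same example.
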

\begin{proof}
To show that each $r(l(Ra_1R))\vee...\vee r(l(Ra_nR))$ is compact, we first show that each $r(l(RaR))$ is compact. Let $\{I_{\alpha}: \alpha\in S\}$ be a  directed collection of right annihilator ideals with $r(l(RaR))\leq\bigvee_{\alpha\in S}I_{\alpha}$. Then $r(l(RaR))\subseteq\bigcup_{\alpha\in S}I_{\alpha}$. Since $a\in r(l(RaR))$, we have $a\in I_{\alpha}$, for some $\alpha\in S$. This implies $r(l(RaR))\subseteq I_{\alpha}$, since $I_{\alpha}$ is a right annihilator ideal. Therefore $r(l(RaR))$ is compact, and hence  $r(l(Ra_1R))\vee...\vee r(l(Ra_nR))$ is compact. Now let $K\in \mathfrak{k}{(rAnn(id(R)))}$. Since $K$ is a right annihilator ideal, for each $a\in K$, $r(l(RaR))\subseteq K$, hence $\bigvee_{a\in K}r(l(RaR))\leq K$. On the other hand, $K\leq\sum_{a\in K}r(l(RaR))\leq\bigvee_{a\in K}r(l(RaR))$. Thus $K=\bigvee_{a\in K}r(l(RaR))$. Since $K$ is compact, there are finitely elements $a_{1}, ..., a_{n}\in R$ such that $K=\bigvee_{i=1}^{n}r(l(Ra_{i}R))$. This completes our proof.
\end{proof}
If $I$ is a right annihilator ideal, then $I=\bigvee_{a\in I}r(l(RaR))$. This together with the above result imply that $rAnn(id(R))$ is an algebraic frame.
\begin{lem}\label{100}
Let $R$ be a semiprime ring.
Then $R$ is a reduced ring \ifif for each $a, b\in R$, $l(RaRbR)=l(RabR)$.
\end{lem}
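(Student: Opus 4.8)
The plan is to treat the two implications separately, after first recording the trivial half of the desired equality. Because $R$ has an identity, $ab \in RaR\cdot RbR = RaRbR$, so $RabR \subseteq RaRbR$ and consequently $l(RaRbR) \subseteq l(RabR)$ holds in \emph{any} ring. Thus in both directions only the reverse inclusion, or its failure, carries information.

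For the implication ($\Leftarrow$), suppose the identity $l(RaRbR) = l(RabR)$ holds for all $a,b$, and let $c \in R$ satisfy $c^2 = 0$; I want $c = 0$. Specializing to $a = b = c$, the relation $c^2 = 0$ gives $RccR = Rc^2R = 0$, hence $l(RcRcR) = l(Rc^2R) = l(0) = R$. Since $1 \in R$, this forces $RcRcR = 0$, and in particular $cRc \subseteq RcRcR = 0$ (take the outer factors to be $1$). Semiprimeness of $R$ then yields $c = 0$, so $R$ is reduced. This direction is short and uses only semiprimeness together with the unit.

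The implication ($\Rightarrow$) is where the work lies. Assuming $R$ reduced, I only need $l(RabR) \subseteq l(RaRbR)$. The engine is two standard facts about reduced rings, each proved by squaring: if $pq = 0$ then $qp = 0$ (since $(qp)^2 = q(pq)p = 0$), and if $pq = 0$ then $pRq = 0$ (since $(prq)^2 = pr(qp)rq = 0$, using $qp=0$ from the first fact). Now take $x \in l(RabR)$; evaluating $xRabR = 0$ at the unit gives the single relation $xab = 0$. From it I would extract, in order, $abx = 0$ (swap), then $bxa = 0$ by checking $(bxa)^2 = bx(abx)a = 0$, then $(xaub)^2 = xau(bxa)ub = 0$ and hence $xaub = 0$ for every $u$. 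Finally, $x(aub) = 0$ together with $pRq = 0$ gives $xs(aub) = 0$ for all $s$, and right multiplication gives $xsaubt = 0$ for all $s,u,t$; since these are exactly the generators of $RaRbR$, I conclude $x \in l(RaRbR)$.

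The main obstacle is the middle insertion: passing from $xab = 0$ to $xaub = 0$ for an \emph{arbitrary} $u$ placed between $a$ and $b$. This is precisely the step that fails for merely semiprime rings and needs the nilpotent-free hypothesis; the whole argument hinges on locating a vanishing subword ($xab$, $abx$, or $bxa$) inside each squared expression, so the careful bookkeeping of these three relations is the crux. One could alternatively invoke Lemma \ref{20} to rewrite $l(RaRbR) = l(RaR \cap RbR)$ and argue with intersections, but the direct squaring argument above seems cleaner and more self-contained.
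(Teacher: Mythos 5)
Your proof is correct and takes essentially the same route as the paper's: the forward direction rests on the identical squaring trick (deducing $bxa=0$ from $xab=0$ via $(bxa)^2=0$) followed by the reduced-ring insertion property $pq=0\Rightarrow pRq=0$, which the paper phrases instead as the coincidence of left and right annihilators ($l(xaR)=r(xaR)$, $l(aRbR)=r(aRbR)$); the backward direction is the same specialization $a=b=c$ with $c^2=0$, yielding $l(RcRcR)=l(0)=R$, hence $RcRcR=0$, and then semiprimeness (you via $cRc=0\Rightarrow c=0$, the paper via $(RcR)^2=0\Rightarrow RcR=0$). There is no gap.
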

\begin{proof}
$\Rightarrow$ Clearly $l(RaRbR)\subseteq l(RabR)$, since $RabR\subseteq RaRbR$. Now, assume $x\in l(RabR)$. Then $xab=0$, which implies $(bxa)^2=0$. By the reduced hypothesis, we obtain $bxa=0$ and so $b\in l(xaR)=r(xaR)$. Thus $xaRb=0$, which shows that $xaRbR=0$. This leads us to $x\in l(aRbR)=r(aRbR)$. Meaning that $aRbRx=0$ and thus $RaRbRx=0$. This implies $x\in r(RaRbR)=l(RaRbR)$.

$\Leftarrow$ Let $a\in R$ such that $a^2=0$. Then $l((RaR)^2)=l(Ra^2R)=R$, by hypothesis. Hence $(RaR)^2=0$. Since $R$ is semiprime, we conclude that $RaR=0$. Therefore, $a=0$.
\end{proof}
The goal is to show that $rAnn(id(R))$ is a coherent frame. We thus need to have
the meet of two compact elements. For that we apply Lemmas \ref{21} and \ref{100}.
\begin{thm}\label{23}
Let $R$ be a reduced ring. Then $rAnn(id(R))$ is a coherent  frame.
\end{thm}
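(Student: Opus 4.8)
The plan is to verify the two defining conditions of a coherent frame from Section 2: that the top element is compact, and that the lattice has the finite intersection property (the meet of two compact elements is again compact). Since a reduced ring is in particular semiprime, Theorem \ref{H} already gives that $rAnn(id(R))$ is a frame, and the remark following Lemma \ref{21} makes it algebraic; so these two conditions are exactly what remains to check. The top element $R$ is compact essentially for free: taking $a=1$ we have $R1R=R$ and $l(R)=0$, so $R=r(l(R1R))$ is one of the basic generators described in Lemma \ref{21}, hence compact.

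For the finite intersection property, I would first reduce to the case of basic generators. By Lemma \ref{21} every compact element has the form $K=\bigvee_{i=1}^m r(l(Ra_iR))$ and $K'=\bigvee_{j=1}^n r(l(Rb_jR))$. Because $rAnn(id(R))$ is a frame, meet distributes over arbitrary joins, so
\[K\wedge K'=\bigvee_{i,j}\bigl(r(l(Ra_iR))\wedge r(l(Rb_jR))\bigr).\]
A finite join of compact elements is compact, so it suffices to prove that each single pairwise meet $r(l(RaR))\wedge r(l(RbR))$ is compact.

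The heart of the argument is this pairwise computation, and it is where the reduced hypothesis enters. Since $\wedge=\cap$ in this lattice and $R$ is semiprime, Lemma \ref{20}(3) applied to $I=RaR$ and $J=RbR$ gives $r(l(RaR))\cap r(l(RbR))=r(l(RaR\cdot RbR))=r(l(RaRbR))$. Now Lemma \ref{100}, which requires $R$ reduced, yields $l(RaRbR)=l(RabR)$, and therefore
\[r(l(RaR))\wedge r(l(RbR))=r(l(RabR))=r(l(R(ab)R)),\]
a single basic generator, which is compact by Lemma \ref{21}. Hence each pairwise meet is compact, $K\wedge K'$ is a finite join of compact elements and so is compact, and the finite intersection property holds; together with compactness of $R$ this shows $rAnn(id(R))$ is coherent.

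I expect the main obstacle to be the pairwise meet step, and specifically the recognition that reducedness is indispensable there. In a merely semiprime ring one only reaches $r(l(RaRbR))$, where the ideal $RaRbR$ need not be generated by a single element, so the meet is not visibly one of the compact generators of Lemma \ref{21}. Lemma \ref{100} is precisely the device that collapses $r(l(RaRbR))$ back to the principal generator $r(l(R(ab)R))$, and identifying this as the exact point that forces the \emph{reduced} (rather than merely \emph{semiprime}) hypothesis is the key insight of the proof.
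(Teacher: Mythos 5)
Your proof is correct and takes essentially the same route as the paper's: both reduce the meet of two compact elements, via frame distributivity, to pairwise meets of the generators $r(l(RaR))$, and both collapse $r(l(RaR))\cap r(l(RbR))$ to the single compact generator $r(l(RabR))$ by combining Lemma \ref{20} with Lemma \ref{100}, with compactness of the result and of the top element $R=r(l(R1R))$ supplied by Lemma \ref{21}. Your write-up is in fact slightly more explicit than the paper's at two points where the paper is terse (the invocation of distributivity for the double join, and the observation that $R$ is the \emph{top} element, which the paper misstates as the bottom), but the substance is identical.
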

\begin{proof}
We first note that  $R=r(l(R1R))$ (i.e., the bottom element)  is compact in $rAnn(id(R))$, by Lemma \ref{21}. To see coherence, let $K_1, K_2\in\mathfrak{k}(rAnn(id(R)))$ with, say, \[K_1= r(l(Ra_1R))\vee...\vee r(l(Ra_kR)) \quad\text{and}\quad K_2= r(l(Rb_1R))\vee...\vee r(l(Rb_nR)).\] Then Lemmas \ref{20} and  \ref{100} imply \[K_1\wedge K_2=r(l(Ra_1R))\cap r(l(Rb_1R))\vee...\vee r(l(Ra_1R))\cap r(l(Rb_nR))\vee...\vee\]\[r(l(Ra_kR))\cap r(l(Rb_1R))\vee...\vee
r(l(Ra_kR))\cap r(l(Rb_nR))=\]\[r(l(Ra_1Rb_1R)\vee...\vee r(l(Ra_1Rb_nR)\vee...\vee r(l(Ra_kRb_1R)\vee...\vee r(l(Ra_kRb_nR)=\]\[r(l(Ra_1b_1R))\vee...\vee r(l(Ra_kb_nR)).\] Now by using Lemma \ref{21}, $K_1\wedge K_2$ is compact. Therefore, $rAnn(id(R))$ is a
coherent frame.
\end{proof}
\begin{prop}
For each ideal $I$ of $R$, the smallest right annihilator ideal containing $I$ equals to the ideal
\[I_{A}=\bigvee_{a\in I}r(l(RaR)).\]
\end{prop}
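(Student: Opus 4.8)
The plan is to verify three claims in turn: that $I_A$ is a right annihilator ideal, that it contains $I$, and that it is contained in every right annihilator ideal containing $I$, the last being exactly the asserted minimality. The conceptual engine throughout is the observation that, for any ideal $K$ of $R$, the operator $K\mapsto r(l(K))$ produces the \emph{smallest} right annihilator ideal containing $K$: indeed $K\subseteq r(l(K))$ always holds, $r(l(K))$ is a right annihilator ideal since $l(r(l(K)))=l(K)$, and if $K\subseteq A$ with $A=r(l(A))$ then antitonicity of $l$ and $r$ gives $r(l(K))\subseteq r(l(A))=A$. I shall also use that, because $rAnn(id(R))$ is a complete lattice, the symbol $\bigvee$ denotes the least upper bound computed in this lattice (equivalently $\bigvee_{a\in I}r(l(RaR))=r(l(\sum_{a\in I}r(l(RaR))))$).

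First I would record that $I_A$ is a right annihilator ideal. Since each $r(l(RaR))$ lies in $rAnn(id(R))$ and the join of a family of elements of a complete lattice again lies in that lattice, $I_A=\bigvee_{a\in I}r(l(RaR))$ is itself a right annihilator ideal. Next, to see $I\subseteq I_A$, fix $a\in I$. As $R$ has identity, $a=1\cdot a\cdot 1\in RaR\subseteq r(l(RaR))\subseteq I_A$, whence $I\subseteq I_A$.

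For minimality, let $A$ be any right annihilator ideal with $I\subseteq A$; the goal is $I_A\subseteq A$. For each $a\in I$ we have $a\in A$, and since $A$ is a two-sided ideal, $RaR\subseteq A$. Applying $l$ and then $r$ and using $r(l(A))=A$ yields $r(l(RaR))\subseteq r(l(A))=A$. Thus $A$ is an upper bound of the family $\{r(l(RaR)):a\in I\}$ inside $rAnn(id(R))$, and as $I_A$ is by definition the least such upper bound, $I_A\subseteq A$. Combining the three claims shows that $I_A$ is the smallest right annihilator ideal containing $I$.

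I expect no serious obstacle here: the argument is a clean application of the Galois-type closure $r\circ l$ together with the lattice-theoretic meaning of $\bigvee$. The only point requiring care is to keep straight that $\bigvee$ is the supremum computed in $rAnn(id(R))$ rather than a mere ideal sum; once this is fixed, both the statement that $I_A$ is an annihilator ideal and the minimality step are immediate. Note moreover that, unlike the earlier results in this section, no semiprimeness of $R$ is needed.
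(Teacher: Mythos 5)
Your proof is correct, and its organization differs from the paper's in a way worth noting. The paper first identifies the smallest right annihilator ideal containing $I$ concretely as $r(l(I))$ --- it contains $I$, and any $J\in rAnn(id(R))$ with $I\subseteq J$ satisfies $l(J)\subseteq l(I)$, hence $r(l(I))\subseteq r(l(J))=J$ --- and only then shows that the join coincides with this ideal via the sandwich $I\subseteq\sum_{a\in I}r(l(RaR))\subseteq\bigvee_{a\in I}r(l(RaR))\subseteq r(l(I))$, the last inclusion holding because $r(l(RaR))\subseteq r(l(I))$ for each $a\in I$. You instead verify the universal property of the join directly: it lies in $rAnn(id(R))$ by completeness of the lattice, it contains $I$ since $a\in RaR\subseteq r(l(RaR))$, and it is contained in any annihilator ideal $A\supseteq I$ because $r(l(RaR))\subseteq r(l(A))=A$ for each $a\in I$ and $\bigvee$ is the least upper bound with respect to $\subseteq$. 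Both arguments run on the same two engines --- the antitone Galois connection between $l$ and $r$, under which $r\circ l$ is a closure operator whose closed ideals are exactly the right annihilator ideals, and the completeness of $rAnn(id(R))$ --- so neither is deeper or more general than the other; the decomposition is simply mirrored. What the paper's route buys is the explicit identification $I_A=r(l(I))$, which is not part of the statement but is precisely what gets reused downstream (for instance in Lemma \ref{mary}(3),(4) and Corollary \ref{helo}); your route proves exactly the stated proposition without that formula, though your parenthetical remark $\bigvee_{a\in I}r(l(RaR))=r(l(\sum_{a\in I}r(l(RaR))))$ recovers it in one line, since $l(\sum_{a\in I}r(l(RaR)))=\bigcap_{a\in I}l(RaR)=l(\sum_{a\in I}RaR)=l(I)$. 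Your closing observation that no semiprimeness is needed is also accurate: the paper's proof of this proposition likewise uses none.
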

\begin{proof}
It is evident $r(l(I))$ is a right annihilator ideal containing $I$. Now, let $J$ be a right annihilator ideal containing $I$. Then $l(J)\subseteq l(I)$, and hence $r(l(I))\subseteq r(l(J)))=J$. Thus, $r(l(I))$ is the smallest right annihilator ideal containing $I$. We denote it by $I_{A}$ and  we claim that \[I_{A}=\bigvee_{a\in I}r(l(RaR)).\]To see it,  first note that $\bigvee_{a\in I}r(l(RaR))$ is a right annihilator ideal, since $rAnn(id(R))$ is a complete lattice. Next, for each $a\in I$, $r(l(RaR))\subseteq r(l(I))=I_{A}$. Thus, \[I\subseteq\sum_{a\in I}r(l(RaR))\subseteq\bigvee_{a\in I}r(l(RaR))\subseteq I_{A}.\] Since $I_{A}$ is the smallest right annihilator ideal containing $I$, the proof is complete.
\end{proof}
\begin{lem}\label{mary}
Let $I$ and $J$ be two ideals of $R$.
\begin{enumerate}
\item $I\subseteq J$ implies $I_A\subseteq J_A$.

\item If $R$ is a reduced ring, then $I\subseteq \sqrt{I}\subseteq I_A$. Hence $I_A=\sqrt{I}_A$.

\item If $R$ is a semiprime ring, then $(I\cap J)_A=I_A\cap J_A$.

\item $(I+J)_A=(I_A+J_A)_A$.
\end{enumerate}
\end{lem}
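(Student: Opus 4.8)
The plan is to dispatch the four parts in the order (1), (3), (4), (2), because the first three are essentially formal consequences of the facts established in the proposition immediately preceding this lemma, while part (2) is the only place where the reduced hypothesis does real work. Throughout I will use two basic facts: that $I_A=r(l(I))$ is the smallest right annihilator ideal containing $I$, and that the operation $(-)_A$ is therefore idempotent, i.e. $(K)_A=K$ whenever $K$ is a right annihilator ideal (since $r(l(K))=K$ by definition of such ideals, and $r(l(X))$ is always a right annihilator ideal as $l\,r\,l=l$).

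For (1) I would simply note that $l(-)$ and $r(-)$ each reverse inclusions, so their composite $r(l(-))$ preserves inclusions; thus $I\subseteq J$ gives $I_A=r(l(I))\subseteq r(l(J))=J_A$, with no hypothesis on $R$. Part (3) is immediate from Lemma \ref{20}: when $R$ is semiprime that lemma yields $r(l(I\cap J))=r(l(I))\cap r(l(J))$, which is exactly $(I\cap J)_A=I_A\cap J_A$. For (4), one inclusion comes from $I+J\subseteq I_A+J_A$ together with the monotonicity of part (1), giving $(I+J)_A\subseteq(I_A+J_A)_A$; for the reverse, $I\subseteq I+J$ and $J\subseteq I+J$ give $I_A\subseteq(I+J)_A$ and $J_A\subseteq(I+J)_A$ by (1), hence $I_A+J_A\subseteq(I+J)_A$, and applying the monotone operation $(-)_A$ together with idempotency on the right annihilator ideal $(I+J)_A$ yields $(I_A+J_A)_A\subseteq((I+J)_A)_A=(I+J)_A$.

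The substantive step is part (2), where the key claim is that in a reduced ring $I_A=r(l(I))$ is a \emph{semiprime} ideal. To prove this I would verify the defining condition $aRa\subseteq I_A\Rightarrow a\in I_A$ directly. If $aRa\subseteq r(l(I))$ then $l(I)\,aRa=0$, so for each $y\in l(I)$, taking the middle factor to be $1$, we get $ya^2=0$. The standard reduced-ring manipulation, namely $uv=0\Rightarrow vu=0$ (because $(vu)^2=v(uv)u=0$), applied to $(ya)a=0$ gives $a(ya)=aya=0$, and then $(ya)^2=y(aya)=0$ forces $ya=0$. Hence $l(I)a=0$, that is $a\in r(l(I))=I_A$, so $I_A$ is semiprime. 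Since $I\subseteq I_A$ and $\sqrt I$ is the smallest semiprime ideal containing $I$, it follows that $I\subseteq\sqrt I\subseteq I_A$. Finally, applying $(-)_A$ to this chain and using monotonicity together with idempotency gives $I_A\subseteq(\sqrt I)_A\subseteq(I_A)_A=I_A$, so $(\sqrt I)_A=I_A$.

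I expect the only genuine obstacle to be the reduced-ring computation in (2): one must take care that the condition verified, $aRa\subseteq I_A\Rightarrow a\in I_A$, is precisely the semiprimeness of the two-sided ideal $I_A$, and that the inclusion $\sqrt I\subseteq I_A$ is then justified by the minimality of the prime radical among semiprime ideals containing $I$. Everything else reduces to bookkeeping with the order-reversing behaviour of annihilators and the idempotency of $(-)_A$.
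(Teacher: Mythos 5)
Your proposal is correct, and parts of it genuinely diverge from the paper. Parts (1) and (3) coincide with the paper's treatment ((1) is dismissed there as trivial; (3) is the same appeal to Lemma \ref{20}). For (4) the paper computes in one chain using $l(I+J)=l(I)\cap l(J)$ and $l(r(l(I)))=l(I)$ to get $(I+J)_A=r(l(I_A+J_A))$, whereas you run the abstract closure-operator argument (monotonicity plus idempotency of $(-)_A$); these are equivalent bookkeeping, and both are hypothesis-free as the statement requires. The real divergence is in (2). The paper takes $x\in\sqrt{I}$, hence $x^n\in I$ for some $n$, and for $r\in l(I)$ uses Lemmas \ref{20} and \ref{100} to collapse $r(l(Rrx^nR))$ down to $r(l(RrxR))=0$, concluding $rx=0$; this is an elementwise computation which in fact proves the stronger containment $\{x: x^n\in I \text{ for some } n\}\subseteq I_A$. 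You instead prove that $I_A=r(l(I))$ is a \emph{semiprime ideal} --- your verification of $aRa\subseteq I_A\Rightarrow a\in I_A$ via $ya^2=0\Rightarrow aya=0\Rightarrow (ya)^2=0\Rightarrow ya=0$ is a correct reduced-ring computation --- and then invoke the standard fact (see \cite[\S 10]{L}) that $\sqrt{I}$, read as the prime radical, is the smallest semiprime ideal containing $I$. Your route is more self-contained (it bypasses Lemmas \ref{20} and \ref{100} entirely) and cleaner in the noncommutative setting, at the modest price of pinning $\sqrt{I}$ to the prime-radical reading: the paper's elementwise argument also covers elements merely nilpotent modulo $I$, a set that contains the prime radical (take the $m$-sequence with all middle factors equal to $1$ to see $x^{2^k}\in I$) but can be strictly larger in noncommutative rings. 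Under either reading the stated lemma follows, and your concluding derivation of $\sqrt{I}_A=I_A$ from monotonicity and idempotency is sound.
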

\begin{proof}
(1) Trivial.

(2) It is enough to show that $\sqrt{I}\subseteq I_A$. Consider $x\in\sqrt{I}$ and $r\in l(I)$. Then $x^n\in I$ for some $n\in\Bbb N$ and hence $rx^n=0$. By Lemmas \ref{20} and \ref{100} we have 
\[0=r(l(Rrx^nR))=r(l(RrRx^nR))=r(l(RrR))\cap r(l(Rx^nR)=\]\[r(l(RrR))\cap r(l(RxR))=r(l(RrRxR)=r(l(RrxR)).\] This implies $l(RrxR)=R$ and hence $rx=0$, i.e., $x\in r(l(I))=I_A$.

(3) By Lemma \ref{20},  we have \[I_A\cap J_A=r(l(I))\cap r(l(J))=r(l(I\cap J))=(I\cap J)_A.\]

(4) We have,
\[(I+J)_A=r(l(I+J))=r(l(I)\cap l(J))=\]\[r(l(r(l(I))\cap l(r(l(J))))=\]\[r(l(r(l(I))+r(l(J))))=\]\[r(l(I_A+ J_A))=(I_A+J_A)_A.\]
\end{proof}
\begin{cor}\label{helo}
The following statements hold.
\begin{enumerate}
\item For an ideal $I$ of a semiprime ring $R$, $I_A=R$ \ifif $I$ is an essential ideal.

\item A maximal ideal $M$ of a semiprime ring $R$ is a right annihilator ideal \ifif it is generated by an idempotent.
\end{enumerate}
\end{cor}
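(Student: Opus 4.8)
The plan is to reduce both parts to the elementary annihilator calculus available in a semiprime ring. Throughout I will use that, by the proposition preceding Lemma \ref{mary}, $I_A=r(l(I))$, together with the standard semiprime facts that for a two-sided ideal $I$ one has $l(I)=r(I)$ and $I\cap l(I)=0$; both follow from the element-wise semiprimeness criterion $aRa=0\Rightarrow a=0$, in the same spirit as the computations in Lemmas \ref{20} and \ref{100}.

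For part (1), I would first observe that since $R$ has an identity, $I_A=r(l(I))=R$ holds \ifif $l(I)=0$: indeed $r(l(I))=R$ means $l(I)\cdot R=0$, whence $l(I)=l(I)\cdot 1=0$, and the converse is immediate. It then remains to prove $l(I)=0$ \ifif $I$ is essential. If $I$ is not essential, choose a nonzero ideal $J$ with $I\cap J=0$; then $IJ\subseteq I\cap J=0$, so $J\subseteq r(I)=l(I)$, forcing $l(I)\neq 0$. Conversely, if $l(I)\neq 0$ then, since $l(I)$ is a nonzero two-sided ideal meeting $I$ trivially, $I$ fails to be essential. The only point requiring care is the equality $I\cap l(I)=0$: for $x$ in this intersection one has $xI=0$ and $RxR\subseteq I$, so $x\,RxR\subseteq xI=0$, giving $(RxR)^2=0$ and hence $RxR=0$, i.e.\ $x=0$ by semiprimeness.

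For part (2), the backward direction is a direct computation: if $M=eR=Re$ is generated by a central idempotent $e$, then $l(M)=R(1-e)$ and $r(l(M))=eR=M$, so $M$ is a right annihilator ideal. For the forward direction, suppose the maximal ideal $M$ satisfies $r(l(M))=M$. Since $M$ is proper, $l(M)\neq 0$ (otherwise $M=r(0)=R$). Now $l(M)=r(M)$ is a nonzero two-sided ideal, and it cannot lie inside $M$ (else $l(M)\subseteq M\cap l(M)=0$); by maximality this forces $M+l(M)=R$. Combined with $M\cap l(M)=0$, this gives a ring direct-sum decomposition $R=M\oplus l(M)$ of two-sided ideals, and writing $1=f+e$ with $f\in M$, $e\in l(M)$ produces orthogonal central idempotents with $M=Rf=R(1-e)$; thus $M$ is generated by the idempotent $f$.

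The main obstacle is not any single step but assembling the semiprime annihilator machinery cleanly: establishing $l(I)=r(I)$ and $I\cap l(I)=0$ for two-sided ideals, and---most delicately in part (2)---verifying that the decomposition $R=M\oplus l(M)$ into two-sided ideals genuinely yields \emph{central} idempotents, so that ``generated by an idempotent'' is literally satisfied. All of these rest on the semiprimeness reduction $aRa=0\Rightarrow a=0$ already exploited earlier in the paper.
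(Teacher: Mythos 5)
Your proof is correct, and for part (2) it takes a genuinely different, more self-contained route than the paper. For part (1) the skeleton is the same as the paper's --- $I_A=r(l(I))=R$ \ifif $l(I)=0$ \ifif $I$ is essential --- except that where the paper simply cites \cite[Corollary 14.2]{L1} for the second equivalence, you prove it from scratch: $I\cap l(I)=0$ via the $(RxR)^2=0$ computation, and $J\subseteq l(I)$ for any ideal $J$ with $I\cap J=0$ (here your detour through $r(I)=l(I)$ is harmless but unnecessary, since $JI\subseteq I\cap J=0$ gives $J\subseteq l(I)$ directly). For part (2) the paper argues top-down: it invokes the well-known dichotomy that a maximal ideal is either essential or generated by an idempotent, and then uses maximality --- $M\subseteq M_A$ forces $M_A\in\{M,R\}$ --- so that part (1) finishes the equivalence. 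You instead reprove the relevant half of that dichotomy from the annihilator hypothesis itself: $M=r(l(M))$ proper gives $l(M)\neq 0$, semiprimeness gives $M\cap l(M)=0$, and maximality then yields the ring decomposition $R=M\oplus l(M)$ with central idempotents; the converse is a two-line computation that, as you note implicitly, does not even need semiprimeness. What your route buys is precision on a point the paper leaves loose: read literally, ``generated by an idempotent'' (i.e., $M=ReR$ for some idempotent $e$) would make the backward implication false --- in the semiprime ring $\mathrm{End}_k(V)$ with $\dim V=\aleph_0$, the unique maximal ideal of finite-rank endomorphisms is generated by a rank-one idempotent yet is essential, hence not an annihilator ideal --- so the correct reading, and the one the dichotomy actually delivers, is ``generated by a \emph{central} idempotent,'' i.e., $M$ is a ring direct summand; your verification that the decomposition produces central idempotents is exactly what legitimizes this. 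What the paper's route buys in exchange is brevity and the reusable observation that $M_A\in\{M,R\}$ for any maximal ideal $M$.
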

\begin{proof}
(1) The equality $r(l(I))=I_A=R$ is equivalent to the $l(I)= 0$. By \cite[Corollary 14.2]{L1},  $l(I)=0$ \ifif $I$ is an essential ideal.

(2) It is well-known that every maximal ideal $M$ in a ring $R$ is either essential or generated by an idempotent. Now, if $M$ is a right annihilator ideal, then $M=r(l(M))=M_A$. By Part (1), this is equivalent to the $M$ being a non-essential ideal, i.e., $M$ must be generated by an idempotent.
\end{proof}

In the next result, we provide a characterization of the largest (resp., smallest) annihilator ideal contained in (resp., consisting of) an ideal $I$ of an $SA$-ring $R$. We denote the largest right annihilator ideal contained in $I$ by $I^A$.
\begin{thm}
For any ring $R$ the following statements are equivalent.
\begin{enumerate}
\item For each ideal $I$ of $R$, the largest right annihilator ideal contained in $I$ exists.

\item The ring $R$ is a right $SA$-ring.

\item For each two ideals $I$ and $J$ of $R$, $(I+J)_A=I_A+J_A$.
\end{enumerate}
Furthermore, if one of the above conditions satisfies on $R$, then for an ideal $I$ of $R$, \[I_{A}=\sum_{a\in I}r(l(RaR))\quad^{\dag}\quad\text{and}\quad I^{A}=\sum_{r(l(RaR))\subseteq I}r(l(RaR))\quad^{\ddag}.\]
\end{thm}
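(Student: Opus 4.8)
The plan is to run the cycle $(1)\Rightarrow(2)\Leftrightarrow(3)\Rightarrow(1)$ and then read off the two displayed formulas once any of the equivalent conditions is in force. For $(1)\Rightarrow(2)$ I would test condition (1) on a cleverly chosen ideal. Given ideals $I,J$, both $r(I)$ and $r(J)$ are right annihilator ideals, so I form the ordinary sum $K_0=r(I)+r(J)$ and let $B$ be the largest right annihilator ideal contained in $K_0$, which exists by (1). Since $r(I)\subseteq K_0$ and $r(I)$ is itself a right annihilator ideal, $r(I)\subseteq B$, and likewise $r(J)\subseteq B$; hence $r(I)+r(J)\subseteq B\subseteq K_0=r(I)+r(J)$, forcing $B=r(I)+r(J)$. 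Thus $r(I)+r(J)=r(l(B))=r(K)$ with $K=l(B)$, which is exactly the right $SA$ condition. This direction is clean and uses nothing beyond the definitions.

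For $(2)\Leftrightarrow(3)$ the key identity is Lemma \ref{mary}(4), namely $(I+J)_A=(I_A+J_A)_A$, which holds in every ring. Consequently (3) is equivalent to $(I_A+J_A)_A=I_A+J_A$, i.e.\ to the assertion that the sum of any two right annihilator ideals is again a right annihilator ideal (every right annihilator ideal $H$ equals $H_A$, since $H=r(l(H))$). On the other hand, because each right annihilator ideal has the form $r(X)$ for the two-sided ideal $X=l(H)$, the $SA$ condition $r(I)+r(J)=r(K)$ says precisely that the sum of two right annihilator ideals is a right annihilator ideal. Hence (2) and (3) are the same statement, phrased in annihilator- versus $A$-closure language.

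For $(3)\Rightarrow(1)$ together with the two formulas I would argue structurally. Writing $T'=\sum_{r(l(RaR))\subseteq I}r(l(RaR))$, it is immediate that $T'\subseteq I$, and that every right annihilator ideal $B\subseteq I$ satisfies $B\subseteq T'$: indeed $B=\sum_{b\in B}r(l(RbR))$, each summand $r(l(RbR))\subseteq B\subseteq I$ being one of the terms defining $T'$. Thus once $T'$ is shown to be a right annihilator ideal it is automatically the largest one inside $I$, giving (1) and the formula $I^{A}=T'$ ($\ddag$); the companion formula $I_{A}=\sum_{a\in I}r(l(RaR))$ ($\dag$) follows identically from $I_A=\bigvee_{a\in I}r(l(RaR))=\bigl(\sum_{a\in I}r(l(RaR))\bigr)_A$ once that ordinary sum is known to be a right annihilator ideal.

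The main obstacle is exactly this last point: showing that an arbitrary, possibly infinite, sum of right annihilator ideals is again a right annihilator ideal under the $SA$ hypothesis, equivalently that in the complete lattice $rAnn(id(R))$ the join of the family $\{A\in rAnn(id(R)):A\subseteq I\}$ coincides with its ideal-theoretic sum and still lies below $I$. Condition (3) delivers closure only under \emph{finite} sums, whereas a directed union of right annihilator ideals need not be a right annihilator ideal; so the finite condition must be bootstrapped to the infinite case, and this is genuinely delicate. The natural attempt is a Zorn's lemma argument on the poset $\{A\in rAnn(id(R)):A\subseteq I\}$, which is upward directed once finite sums are available, reducing the existence of a largest element to the existence of maximal elements, hence to the claim that unions along chains remain right annihilator ideals. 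Verifying this chain-closure is the heart of the proof and is precisely where the $SA$ structure must be exploited in full; I would concentrate essentially all the effort there, treating the three implications above as the routine scaffolding.
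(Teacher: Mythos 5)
Your treatment of $(1)\Rightarrow(2)$ and $(2)\Leftrightarrow(3)$ is correct and essentially coincides with the paper's: the paper likewise obtains $(2)$ by applying the largest-annihilator-ideal hypothesis to the sum of two right annihilator ideals, and its proof of $(2)\Rightarrow(3)$ is exactly your Lemma~\ref{mary}(4) observation that $(3)$ amounts to closure of $rAnn(id(R))$ under binary sums. The genuine gap is the step you yourself flagged and postponed: to get $(3)\Rightarrow(1)$ and the formulas $\dag$ and $\ddag$ you must show that the \emph{infinite} sums $\sum_{a\in I}r(l(RaR))$ and $\sum\{J\in rAnn(id(R)): J\subseteq I\}$ are right annihilator ideals, while $(3)$ (equivalently the right $SA$ property) licenses only \emph{finite} sums. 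Since you explicitly leave this bootstrapping unproved, your proposal is not a complete proof; and your fallback plan, Zorn's lemma on the family of right annihilator ideals inside $I$, hinges on unions of chains of right annihilator ideals again being right annihilator ideals, which you also do not verify.

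In fact no amount of effort will close this gap, because the finite-to-infinite passage is false. Take $R=\prod_{i\in\mathbb{N}}k$ with $k$ a field. Every right annihilator ideal of $R$ has the form $R_T=\{x\in R: x_i=0 \text{ for all } i\notin T\}$ for some $T\subseteq\mathbb{N}$, and $R_T+R_{T'}=R_{T\cup T'}=r(R_{(T\cup T')^{c}})$, so $R$ is a (commutative, hence right) $SA$-ring and $(2)$ holds. But for $I=\bigoplus_{i\in\mathbb{N}}k$ the right annihilator ideals contained in $I$ are exactly the $R_T$ with $T$ finite, a family with no largest member; the chain $R_{\{1\}}\subseteq R_{\{1,2\}}\subseteq\cdots$ has union $I$ itself, which is not an annihilator ideal since $r(l(I))=r(0)=R\neq I$. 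Hence $(1)$ fails although $(2)$ holds, chain-closure fails (so Zorn gives nothing), and $\dag$ fails too: $I_A=r(l(I))=R$ while $\sum_{a\in I}r(l(RaR))=\sum_{a\in I}R_{\operatorname{supp}(a)}=I$. You should know that the paper's own proof commits precisely the leap you were suspicious of, asserting ``By hypothesis, $I^{A}$ is a right annihilator ideal'' (and similarly for $\dag$) where the hypothesis covers only binary sums. So your instinct that this is the heart of the matter was exactly right; the correct conclusion is not that the step needs more work, but that $(3)\Rightarrow(1)$ and the displayed formulas require a strengthened hypothesis (for instance, that \emph{arbitrary} sums of right annihilator ideals are right annihilator ideals, or replacing the sums in $\dag$ and $\ddag$ by joins $\bigvee$ in the complete lattice $rAnn(id(R))$).
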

\begin{proof}
(1)$\Rightarrow$(2) Consider two right annihilator ideals $I$ and $J$ in $R$. By hypothesis, the largest right annihilator ideal contained in $I+J$, $(I+J)^{A}$, exists. Since $I$ and $J$ are right annihilator ideals contained in $I+J$, we have $I\subseteq (I+J)^{A}$ and $J\subseteq (I+J)^{A}$. This implies $I+J\subseteq (I+J)^{A}$ and hence we must have $I+J=(I+J)^{A}$, i.e., $I+J$ is a right annihilator ideal.

(2)$\Rightarrow$(3) Clearly $I_A+J_A\subseteq (I+J)_A$. To prove the reverse inclusion, we note that  $I_A+J_A$ is a right annihilator ideal containing $I+J$, by hypothesis.  Thus, $(I+J)_A\subseteq I_A+J_A$, so we are done.

(3)$\Rightarrow$(1)  Clearly $0$ is a right annihilator ideal contained in any ideal $I$ of $R$. Thus, the set of right annihilator ideals contained in $I$ is non-empty. The hypothesis implies that for every two right annihilator ideals $I$ and $J$ of $R$, $I+J=I_A+J_A=(I+J)_A$. That is the sum of two right annihilator ideals in $R$ is a right annihilator ideal. Put \[I^{A}=\sum_{J\in rAnn(id(R)),\quad J\subseteq I}J.\] By hypothesis, $I^{A}$ is a right annihilator ideal and $I^{A}\subseteq I$. This shows that $I^{A}$ is the largest right annihilator ideal contained in  $I$.

Now, assume $R$ is a right $SA$-ring. To prove the equality $\dag$, we note that for each ideal  $I$ of $R$ we have, \[I\subseteq\sum_{a\in I}r(l(RaR))\subseteq\bigvee_{a\in I}r(l(RaR))=I_{A}.\] By hypothesis, $\sum_{a\in I}r(l(RaR))$ is a right annihilator ideal. Since $I_A$ is the smallest right annihilator ideal containing $I$, we must have the equality $\dag$.

  Since $R$ is a  right $SA$-ring, the right hand side of the equality $\ddag$ is  a right annihilator ideal contained in $I^{A}$. To prove the reverse inclusion, let $K$ be a right annihilator ideal contained in $I$ and $x\in K$. Then we have $r(l(RxR))\subseteq K\subseteq I$. Thus, $K=\sum_{a\in K}r(l(RaR))\subseteq \sum_{r(l(RaR))\subseteq I}r(l(RaR))$. This shows that $I^{A}\subseteq \sum_{r(l(RaR))\subseteq I}r(l(RaR))$. This completes the proof of the equality $\ddag$.
\end{proof}
\begin{lem}\label{int}
Let $I_1, I_2,..., I_n$ be ideals of $R$ such that for each $1\leq i\neq j\leq n$, $I_i$ and $I_j$ be  co-prime. Then for each $1\leq j\leq n$, $I_j$ and $\bigcap_{i=1, i\neq j}^{n}I_i$ are co-prime.
\end{lem}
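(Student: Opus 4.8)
The plan is to prove this by a finite induction that reduces everything to a single two-term co-primality fact. By relabelling the indices it suffices to treat the case $j = n$, so the goal becomes showing $I_n + \bigcap_{i=1}^{n-1} I_i = R$, given that $I_n + I_i = R$ for every $i < n$.

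The key sub-claim I would establish first is: if $I + J = R$ and $I + K = R$ for (two-sided) ideals $I, J, K$, then $I + (J \cap K) = R$. To see this, write $1 = a + b$ and $1 = c + d$ with $a, c \in I$, $b \in J$, and $d \in K$. Expanding the product gives $1 = (a+b)(c+d) = ac + ad + bc + bd$. The first three summands lie in $I$, since $I$ is an ideal and $a, c \in I$, while the cross term $bd$ lies in $J \cap K$: indeed $bd \in J$ because $b \in J$ and $J$ is a right ideal, and $bd \in K$ because $d \in K$ and $K$ is a left ideal. Hence $1 \in I + (J \cap K)$, so $I + (J \cap K) = R$.

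With this sub-claim in hand, I would fix $I = I_n$ and induct on the number of factors appearing in the intersection. From $I_n + I_1 = R$ and $I_n + I_2 = R$ the sub-claim yields $I_n + (I_1 \cap I_2) = R$; applying it again with $K = I_3$ gives $I_n + (I_1 \cap I_2 \cap I_3) = R$, and after $n-2$ such steps we arrive at $I_n + \bigcap_{i=1}^{n-1} I_i = R$. Re-indexing then recovers the statement for an arbitrary $j$.

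The argument is essentially the classical commutative one, so no serious obstacle arises. The only point that genuinely requires the non-commutative hypotheses is the membership $bd \in J \cap K$ in the sub-claim, which is precisely where two-sidedness of the ideals is used (once as a right multiplication to get $bd \in J$, once as a left multiplication to get $bd \in K$). Everything else is a routine finite induction.
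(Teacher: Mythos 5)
Your proposal is correct and takes essentially the same route as the paper: the paper's proof multiplies all $n-1$ witnesses $1=a_{ij}+b_i$ in a single product and sorts the expanded terms into $I_j$ versus $\bigcap_{i\neq j}I_i$ using two-sidedness, which is exactly what your two-term sub-claim does when unrolled by induction. Your identification of where two-sidedness is genuinely needed (placing the cross term $bd$ in $J\cap K$) matches the implicit use in the paper's expansion, so there is no gap.
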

\begin{proof}
Fix $1\leq j\leq n$, we have $I_j+ I_i=R$ for each $1\leq i\leq n$ with $i\neq j$. Thus for each $1\leq i\leq n$ with $i\neq j$, there exist elements $a_{ij}\in I_j$ and $b_i\in I_i$ such that $1=a_{ij}+ b_i$. Multiplying these expressions for all $i\neq j$, we obtain  $1=\prod_{i=1, i\neq j}^n(a_{ij}+ b_i)$, which is of the form $x+y$, where $x\in I_j$ and $y\in\bigcap_{i=1, i\neq j}^{n}I_i$. So we are done.
\end{proof}
\begin{prop}\label{park}
Let $I_1, I_2,..., I_n$ be ideals of a semiprimme ring $R$ such that for each $1\leq i\neq j\leq n$, $I_i$ and $I_j$ be  co-prime. Then $\bigcap_{i=1}^{n}I_i$ is a right annihilator ideal \ifif each $I_j$ ($1\leq j\leq n$) is a right annihilator ideal.
\end{prop}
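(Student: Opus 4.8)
The plan is to treat the two implications separately, the substantive one being the passage from the intersection to the individual ideals.

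The direction asserting that if each $I_j$ is a right annihilator ideal then so is $\bigcap_{i=1}^n I_i$ needs neither co-primality nor semiprimeness: the intersection of any family of right annihilator ideals is again a right annihilator ideal. Indeed one always has $\bigcap_{i=1}^n I_i\subseteq r(l(\bigcap_{i=1}^n I_i))$, and since $\bigcap_{i=1}^n I_i\subseteq I_j$ forces $l(I_j)\subseteq l(\bigcap_{i=1}^n I_i)$ and hence $r(l(\bigcap_{i=1}^n I_i))\subseteq r(l(I_j))=I_j$, intersecting over $j$ yields the reverse inclusion. (This is just the fact that the meet in $rAnn(id(R))$ is intersection.)

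For the converse I would reduce the $n$-fold intersection to a two-ideal situation and then exploit co-primality to split elements. Fix $j$ and set $K_j=\bigcap_{i\neq j}I_i$, so that $\bigcap_{i=1}^n I_i=I_j\cap K_j$ and, by Lemma \ref{int}, $I_j+K_j=R$. Pick $a\in I_j$ and $b\in K_j$ with $a+b=1$. Using the earlier identification $I_A=r(l(I))$ of the smallest right annihilator ideal containing $I$, it suffices to prove $(I_j)_A\subseteq I_j$. So take $x\in (I_j)_A$ and write $x=xa+xb$. Here $xa\in I_j$ because $a\in I_j$ and $I_j$ is a left ideal, so everything comes down to showing $xb\in I_j$.

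This last point is where the hypotheses enter and is the step I expect to be the main obstacle. Since $(I_j)_A$ is a right annihilator ideal, in particular a right ideal, we have $xb\in (I_j)_A$; and since $K_j$ is an intersection of two-sided ideals, hence a left ideal, and $b\in K_j$, we have $xb\in K_j$. Thus $xb\in (I_j)_A\cap K_j\subseteq (I_j)_A\cap (K_j)_A$. Now Lemma \ref{mary}(3), which is exactly where semiprimeness is used, gives $(I_j)_A\cap (K_j)_A=(I_j\cap K_j)_A$, and because $I_j\cap K_j=\bigcap_{i=1}^n I_i$ is a right annihilator ideal by hypothesis, $(I_j\cap K_j)_A=I_j\cap K_j$. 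Hence $xb\in I_j\cap K_j\subseteq I_j$, and therefore $x=xa+xb\in I_j$, as required. The points needing care are that the co-primality of $I_j$ and $K_j$ (supplied by Lemma \ref{int}) is genuinely what lets one write $1=a+b$ and decompose $x$, and that the one-sided containments $xa\in I_j$, $xb\in (I_j)_A\cap K_j$ are tracked correctly; once these are in place, the annihilator identity $(I\cap J)_A=I_A\cap J_A$ does the rest.
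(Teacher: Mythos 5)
Your proof is correct. The necessity direction (intersections of right annihilator ideals are right annihilator ideals) is verified exactly as you say and indeed needs no hypotheses. For the converse, your overall architecture coincides with the paper's: both fix $j$, invoke Lemma \ref{int} to get $I_j+K_j=R$ with $K_j=\bigcap_{i\neq j}I_i$, split $1=a+b$, and decompose an arbitrary element of $r(l(I_j))$ so that the piece landing in $I_j$ is trivial and the cross term is the whole problem. Where you genuinely diverge is in handling that cross term. The paper works element-theoretically: for $a$ with $l(I_j)\subseteq l(a)$ it applies Lemma \ref{into} (the semiprime lemma $l(I)\subseteq l(a)\Rightarrow l(Iy)\subseteq l(ay)$) to get $l(I_jy)\subseteq l(ay)$, then uses $I_jy\subseteq\bigcap_k I_k$ and the annihilator-ideal characterization of the intersection to conclude $ay\in\bigcap_k I_k\subseteq I_j$. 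You instead argue lattice-theoretically through the closure operator $(\cdot)_A$: you place $xb$ in $(I_j)_A\cap K_j\subseteq (I_j)_A\cap (K_j)_A$, then collapse via Lemma \ref{mary}(3), i.e.\ $(I_j\cap K_j)_A=(I_j)_A\cap (K_j)_A$ (which rests on Lemma \ref{20} and is where semiprimeness enters for you), and finally use the hypothesis that $I_j\cap K_j$ is a right annihilator ideal to get $(I_j\cap K_j)_A=I_j\cap K_j$. So semiprimeness is consumed in both proofs, but through different lemmas: Lemma \ref{into} in the paper, Lemma \ref{20}/\ref{mary}(3) in yours. Your route is arguably cleaner given the Section 5 machinery already in place, and it transfers to any setting where $(\cdot)_A$ preserves finite meets; the paper's route has the virtue of running in parallel with its earlier element-wise arguments (the $z^\circ$-ideal analogue and the subsequent proposition on $I\cap P$), where Lemma \ref{into} is the common engine. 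Your bookkeeping of the one-sided containments ($xa\in I_j$ via left multiplication, $xb\in (I_j)_A$ since $r(l(I_j))$ is a two-sided ideal, $xb\in K_j$ via left multiplication) is accurate throughout.
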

\begin{proof}
The necessity is obvious. Conversely, assume that $\bigcap_{i=1}^{n}I_i$ is a right annihilator ideal. Consider the ideal $I_j$ for some $1\leq j\leq n$ with $l(I_j)\subseteq l(a)$ and $a\in R$. By Lemma \ref{int}, $I_j$ and $\bigcap_{k=1, k\neq j}^{n}I_k$ are co-prime. Therefore $I_j+\bigcap_{k=1, k\neq j}^{n}I_k=R$. Hence $1=x+y$ for some $x\in I_j$ and $y\in \bigcap_{k=1, k\neq j}^{n}I_k$. Multiplying both sides by $a$, we get $a=ax+ ay$.  By Lemma \ref{into},  $l((I_jy)\subseteq l(ay)$. Now, since $I_jy\subseteq \bigcap_{k=1}^{n}I_k$ and $\bigcap_{k=1}^{n}I_k$ is a right annihilator ideal, we have $ay\in \bigcap_{k=1}^{n}I_k$. Thus $ay\in I_j$. Since $ax\in I_j$, we conclude that $a=ax+ay\in I_j$.
\end{proof}
Proposition \ref{park} does not hold for an infinite number of co-prime ideals. Consider the ring $\Bbb Z$ (i.e., the ring of integer numbers). Then we have the intersection of all its maximal ideals is zero. But none of them is an annihilator ideal, by Corollary \ref{helo}.
\begin{prop}
The following statements hold.
\begin{enumerate}
\item Let $I$ be an  ideal of a semiprime ring $R$ and  $P$ be a prime ideal of  $R$ with
$I\cap P$  a right annihilator ideal. Then either $I$ or $P$ is a right annihilator ideal.

\item Let $P$ and $Q$ be prime ideals of a semiprime ring $R$ which do not belong
to a chain and $P\cap Q$ is a right annihilator ideal. Then both $P$ and $Q$ are
right annihilator ideals.

\item  Let $I$ be an ideal of a semiprime ring  $R$ and $M$ be a maximal ideal of $R$ such that
$I\not \subseteq M$ and $I\cap M$ is a right annihilator ideal. Then both ideals
$I$ and $M$ are  right annihilator ideals.
\end{enumerate}
\end{prop}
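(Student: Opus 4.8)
The plan is to treat Part (1) as the heart of the matter and then obtain Parts (2) and (3) as consequences. The argument for Part (1) will closely parallel the proof of the earlier proposition on $z^{\circ}$-ideals in reduced rings, but rephrased through the annihilator characterization recalled at the start of this section: an ideal $P$ is a right annihilator ideal exactly when $l(P)\subseteq l(x)$ forces $x\in P$. So for Part (1) I would first dispose of the trivial case $I\subseteq P$, where $I\cap P=I$ is already a right annihilator ideal; then, assuming $I\not\subseteq P$, I would aim to show that $P$ itself is a right annihilator ideal.

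For that core step, take $x\in R$ with $l(P)\subseteq l(x)$ and fix some $y\in I\setminus P$. Since $P$ and $I$ are two-sided ideals, for every $r\in R$ the product $Pry$ lies in both $P$ and $I$, hence in $I\cap P$. Applying Lemma \ref{into} (with the ideal $P$, the element $x$, and the element $ry$) gives $l(Pry)\subseteq l(xry)$; combining this with $l(I\cap P)\subseteq l(Pry)$ and then taking right annihilators yields $r(l(xry))\subseteq r(l(I\cap P))=I\cap P$, the last equality because $I\cap P$ is a right annihilator ideal. Since $xry\in r(l(xry))$, we obtain $xry\in I\cap P\subseteq P$ for every $r\in R$, that is, $xRy\subseteq P$. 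As $P$ is prime and $y\notin P$, primeness forces $x\in P$, so $P$ is a right annihilator ideal.

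Parts (2) and (3) should then fall out quickly. For Part (2), since $P$ and $Q$ do not lie in a chain, I would apply Part (1) twice: once with the ambient ideal taken to be $P$ and the prime taken to be $Q$ (using $P\not\subseteq Q$) to conclude that $Q$ is a right annihilator ideal, and once with the roles reversed (using $Q\not\subseteq P$) to conclude that $P$ is a right annihilator ideal. For Part (3), maximality of $M$ together with $I\not\subseteq M$ gives $I+M=R$, so $I$ and $M$ are co-prime; then Proposition \ref{park} (with $n=2$) immediately yields that both $I$ and $M$ are right annihilator ideals, since their intersection is.

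The main obstacle I anticipate is the passage in Part (1) from a single containment to the full two-sided product $xRy\subseteq P$, which is precisely what makes the primeness of $P$ usable. The naive first attempt—applying Lemma \ref{into} only to $y$—produces merely $xy\in P$, which is not enough to invoke primeness in the noncommutative setting; the remedy is to run the annihilator manipulation uniformly over all the products $Pry$ so as to capture the entire set $xRy$. Care is also needed to keep the chain $l(I\cap P)\subseteq l(Pry)\subseteq l(xry)$ and its reversal under $r(\cdot)$ in the correct order, and to verify that the containment $Pry\subseteq I\cap P$ genuinely uses that both $I$ and $P$ are two-sided.
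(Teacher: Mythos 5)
Your proposal is correct and takes essentially the same route as the paper: the same split into the trivial case $I\subseteq P$, the same use of Lemma \ref{into} to obtain the two-sided containment $xRy\subseteq P$ (the paper's $aRx\subseteq P$, compressing your ``for all $r$'' step into $l(Px)\subseteq l(aRx)$) followed by the elementwise primeness criterion, and the same appeal to co-primeness plus Proposition \ref{park} for Part (3), with your reuse of Part (1)'s refined conclusion for Part (2) being only a cosmetic shortcut over the paper's repetition of the argument. If anything, your version is slightly cleaner at the final step of (1): the paper's phrase ``As $a\notin P$, we have $x\in P$'' is a typo for ``as $x\notin P$, primeness gives $a\in P$,'' which is exactly what your write-up says.
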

\begin{proof}
(1) If $I\subseteq P$, then $I=I\cap P$ is a right annihilator ideal. Now suppose that $I\not\subseteq P$ and $a\in R$ and $l(P)\subseteq l(a)$. Then, there exists $x\in I\setminus P$. By Lemma \ref{into},  $l(Px)\subseteq l(aRx)$. Since $I\cap P$ is a right annihilator ideal and $Px\subseteq I\cap P$, we have $aRx\subseteq I\cap P$, and hence $aRx\subseteq P$. As $a\not\in P$, we have $x\in P$. Hence, $P$ is a right annihilator ideal.

(2) Let $x\in Q\setminus P$ and $l(P)\subseteq l(a)$ for some $a\in R$. Then, $l(Px)\subseteq l(aRx)$, by Lemma \ref{into}. Since $P\cap Q$ is a right annihilator ideal and $Px\subseteq P\cap Q$, we have $aRx\in P\cap Q$. Hence, $aRx\in P$. This implies $a\in P$. Similarly, we can prove that $Q$ is a right annihilator ideal.

(3) Since $I\not\subseteq M$, $I$ and $M$ are co-prime and hence by Proposition \ref{park}, $I$ and $M$ are right annihilator ideals.
\end{proof}


\begin{thebibliography}{10}
\bibitem{AT}{M. Ahmadi, A. Taherifar, \textit{On the lattice of $z^\circ$-ideals (resp. $z$-ideals) and its Applications},
Filomat, Vol $38$, No $22 (2024)$.}
\bibitem{A}{A. R. Aliabad, \textit{$z^{\circ}$-ideals in $C(X)$}, Shahid Chamran University of Ahvaz, Iran, PhD. Thesis, 1996.}
\bibitem{AAP}{ A. R. Aliabad, F. Azarpanah and M. Paimann, \textit{$z$-ideals and $z^{\circ}$-ideals in the factor rings of $C(X)$}, Bull. Iran. Math.
Soc. $36 (2010)$, no. $1$, $211$–$226$.}
\bibitem{AM}{A.R. Aliabad and R. Mohamadian, \textit{On $SZ^{\circ}$-Ideals in Polynomial Rings}, Comm.
Algebra $39 (2011)$, $701$–$717$.}
\bibitem{AMM}{G. Artico, U. Marconi and R. Moresco, \textit{A subspace of Spec(A) and its connections with the maximal ring of
quotients}, Rend. Sem. Mat. Univ. Padova $64 (1981)$ $93$–$107$.}
\bibitem{AZ}{F. Azarpanah, \textit{An overview of $z$-ideals and $z^{\circ}$-ideals}, J. Iran. Math. Soc. $5 (2024)$, no. $2$, $127$-$170$.}
\bibitem{AKR}{F. Azarpanah,  O.A. S. Karamzadeh, and  A.R. Aliabad. \textit{On ideals consisting entirely of zero-divisors},  Commun. Algebra. $28(2)(2000)$, $1061$-$1073$.}
\bibitem{AKN}{F. Azarpanah and M. Karavan, \textit{Nonregular ideals and $z^{\circ}$-ideals in $C(X)$}, Czechoslovak Math. J. $55 (2005)$, no. $130$,
$397$–$407$.}
\bibitem{AK}{F. Azarpanah and O. A. S. Karamzadeh, \textit{Algebraic characterizations of some disconnected spaces}, Ital. J. Pure
Appl. Math. $12 (2002)$ $155$–$168$.}
\bibitem{AM}{F. Azarpanah, and R. Mohamadian, \textit{$\sqrt{z}$-ideals and $\sqrt{z^{\circ}}$-ideals in $C(X)$}, Acta. Math. Sin. $23 (2007)$, no. $6$, $989–996$.}
\bibitem{BE}{S. J. Bernau, \textit{Topologies on structure spaces of lattice groups}, Pac. J. Math. $42 (1972)$ $557$–$568$.}
\bibitem{BKW}{A. Bigard, K. Keimel and S. Wolfenstein, \textit{Groupes et Anneaux Réticulés, Lecture Notes in Mathematics} 608.
Springer-Verlag, Berlin-Heidelberg, New York, $1977$.}
\bibitem{B}{G.F. Birkenmeier, M. Ghirati and A. Taherifar. \textit{When is a sum of annihilator ideals an
annihilator ideal?}. Commun. Algebra. $43$ $(2015)$: $2690$-$2702$.}
\bibitem{BO}{A. S. Bondarev, \textit{The presence of projections in quotient lineals of vector lattices}, Dokl. Akad. Nauk. UzSSR $(1974)$,
no. $8$, $5$–$7$.}
\bibitem{CN}{ V. Camillo, W. K. Nicholson,  M. F. Yousif. \textit{Ikeda- Nakayama rings}, J.
Algebra $226(200)$:$1001$–$1010$.}
\bibitem{CO}{M. Contessa, \textit{A note on Baer rings}, J. Algebra $118 (1988)$, no. $1$, $20$–$32$.}
\bibitem{D}{T. Dube, \textit{Rings in which sums of d-ideals are d-ideals}, J. Korean Math. Soc. $56 (2019)$, no. $2$, $539$–$558$.}
\bibitem{DI}{T. Dube and O. Ighedo, \textit{Comments regarding d-ideals of certain f-rings}, J. Algebra Appl. $12 (2013)$, no. $6$, $16$ pp.}
\bibitem{DT} {T. Dube and A. Taherifar, \textit{On the lattice of annihilator ideals and its applications}, Commun.  Algebra. $49(6)$,
$2021$, $2444$--$2456$.}
\bibitem{GJ}
{L. Gillman, M. Jerison. \textit{Rings of Continuous
Functions}. $(1976)$. Springer.}
\bibitem{HP}{E. Hashemi, M. Paykanian. \textit{On the sum of annihilators in Monoid rings}, Rev. Real Acad. Cienc. Exactas Fis. Nat. Ser. A-Mat. $118$, $155 (2024)$. https://doi.org/10.1007/s13398-024-01657-1.}
\bibitem{KLN}{
C.Y. Hong, Kim, N.K., Lee, Y., P.P. Nielsen. $(2009)$.
\textit{The minimal prime spectrum of rings with annihilator
conditions}. J. Pure Appl. Algebra $213$: $1478$~~$1488$.}
\bibitem{HU}{C.B. Huijsmam and B. De Patger, \textit{On z-ideals and d-ideals in Riesz space}, Indag. Math. 42 (Proc. 
Netherl. Acad. Sc. A 83) $183$-$195$ ($1980$).}
\bibitem{L}{T. Y. Lam, \textit{A First Course in
Non-Commutative Ring}s  (New York. springer  $1991$).}
\bibitem{L1}{T.Y. Lam, \textit{Lectures on modules and rings}, Graduate Texts in Mathematics, Vol.
189, Springer-Verlag, New York, $1999$.}
\bibitem{LA}{S. Larson, \textit{Sums of semiprime, z, and d-ideals in a class of f-rings}, Proc. Amer. Math. Soc., $109(1990)$ 
no.$4$,$895$-$901$.}
\bibitem{LU}{W. A. J. Luxemburg, \textit{Extensions of prime ideals and the existence of projections in Riesz spaces}, Indag. Math. $35
(1973)$ $263$–$279$.}
\bibitem{M}{G. Mason, \textit{Prime ideals and quotient rings of reduced rings}, Math. Japon. $34(1989)$, no.$6$,$941$-$95$.}
\bibitem{PP}{J. Picado, A. Pultr \textit{Frames and Locales: topology without points}. Frontiers in Mathematics, Springer,
Basel, $2012$.}
\bibitem{S}{T. P. Speed, \textit{A note on commutative Baer rings}, J. Austral. Math. Soc. $14 (1972)$ $257$–$263$.}
\bibitem{ST}{S.A. Steinberg.  \textit{Lattice ordered ring and
module}.  London, Springer New York Dordrecht Heidelberg, ISBN
$978$-$1$-$4419$-$1720$-$1$.}
\bibitem{TA}{Ali Taherifar,  Mohamad Reza Ahmadi Zand. \textit{Tpological Reperesentation of some lattices of ideals and their applications}, Math. Slovaca $74 (2024)$, No. $2$, $281$–$292$.}
\bibitem{taheri}
{A. Taherifar, \textit{A characterization of Baer-ideals}, J. Algebra. Syst. 2(1)
 (2014), pp 37-51.}
\end{thebibliography}
\end{document}